\newcommand{\mylabel}[2]{#2\def\@currentlabel{#2}\label{#1}}
\newcommand{\C}{\mathcal{C}}
\DeclareMathOperator*{\n}{n}
\DeclareMathOperator{\R}{\mathbb{R}}
\DeclareMathOperator*{\rd}{rd}
\newcommand{\pushright}[1]{\ifmeasuring@#1\else\omit\hfill$\displaystyle#1$\fi\ignorespaces}
\newcommand{\pushleft}[1]{\ifmeasuring@#1\else\omit$\displaystyle#1$\hfill\fi\ignorespaces}
\def\Ddots{\mathinner{\mkern1mu\raise\p@
\vbox{\kern7\p@\hbox{.}}\mkern2mu
\raise4\p@\hbox{.}\mkern2mu\raise7\p@\hbox{.}\mkern1mu}}
\theoremstyle{thmstyleone}%
\theoremstyle{thmstyletwo}%
\theoremstyle{thmstylethree}%
\newtheorem{thm}{Theorem}[section]
\newtheorem{lem}[thm]{Lemma}
\newtheorem{cor}[thm]{Corollary}
\theoremstyle{remark}
\newtheorem{rem}[thm]{Remark}
\newtheorem{defn}[thm]{Definition}
\newtheorem*{thm*}{Theorem}
\theoremstyle{remark}
\theoremstyle{definition}
\begin{document}

\title[Caterpillars with given degree sequence, small Energy and small Hosoya index]{Caterpillars with given degree sequence, small Energy and small Hosoya index}


\author*[1]{\fnm{Eric O. D.} \sur{Andriantiana}}\email{e.andriantiana@ru.ac.za}
\equalcont{These authors contributed equally to this work.}

\author[2]{\fnm{Xhanti} \sur{Sinoxolo}}\email{xhantma@gmail.com }
\equalcont{These authors contributed equally to this work.}

\affil*[1,2]{\orgdiv{Mathematics (Pure \& Applied)}, \orgname{Rhodes University}, \orgaddress{\street{Somerset Street}, \city{Grahamstown}, \postcode{6139}, \state{Eastern Cape}, \country{South Africa}}}


\abstract{The energy $En(G)$ of a graph $G$ is defined as the sum of the absolute values of its eigenvalues. The Hosoya index $Z(G)$ of a graph $G$ is the number of independent edge subsets of $G$, including the empty set.
For any given degree sequence $D$, we characterize the caterpillar $\mathcal{S}(D)$ that has the minimum $Z$ and $En$. 
We also compare $\mathcal{S}(D)$ with $\mathcal{S}(Y)$, for a degree sequence $Y$ majorized by a degree sequence $D$. Suppose $Y=(y_1,\dots ,y_n)$ and $D=(d_1,\dots ,d_n)$ are degree sequences such that $Y$ is majorized by $D$ and$$\sum_{i=1}^{n}y_i=\sum_{i=1}^{n}d_i,$$then $Z(\mathcal{S}(D))<Z(\mathcal{S}(Y))$ and $En(\mathcal{S}(D))<En(\mathcal{S}(Y))$. 
}

\keywords{Energy of Graphs, Hosoya Index, Degree sequence, Caterpillar, Trees}



\maketitle

\section{Introduction}

For a given graph $G=(V(G),E(G))$, with vertex set $V(G)$ and edge set $E(G)$, a subset $M$ of $E(G)$ is called a matching of $G$ if no two edges in $M$ are adjacent in $G$. The edges in $M$ are said to be independent. The two ends of an edge in a matching $M$ of a graph $G$ are said to be matched under $M$. 
Note that $H$ can be empty. The number of matchings of order $k$ of $G$ is denoted by $m(G,k)$. 

Let $G$ be a graph of order $n$, $A(G)$ the adjacency matrix of $G$, the sum of the absolute values of the eigenvalues of $A(G)$ is called the energy of $G$, and is denoted by $En(G)$. The following well-known Coulson integral formula for the energy of forests allows one to study $En (G)$ using $m(G,k)$. 

\begin{thm}\cite{[IGUTMANOEPOLANSKYBOOKMATHEMATICALCONCEPTS]}
If $G$ is a forest of order $n$, then 
\begin{equation}\label{energyoftrees1}
En(G)
=\frac{2}{\pi}\int_{0}^{+\infty}\frac{1}{x^2}\ln\left[\sum_{k\geq0}m(G,k)x^{2k}\right]dx,
\end{equation}
where $m(G,k)$ is the matching number of order $k$.
\end{thm}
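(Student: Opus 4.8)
The plan is to reduce the energy to a single real integral over the eigenvalues, rewrite that integral through the logarithmic derivative of the characteristic polynomial, and invoke acyclicity only at the end to replace the characteristic polynomial by the matching generating polynomial. First I would record the elementary identity
\[
|\lambda|=\frac{1}{\pi}\int_{-\infty}^{\infty}\frac{\lambda^{2}}{x^{2}+\lambda^{2}}\,dx,
\]
valid for every real $\lambda$ (it follows from $\int_{-\infty}^{\infty}(x^{2}+\lambda^{2})^{-1}\,dx=\pi/|\lambda|$). Since $A(G)$ is real symmetric, its eigenvalues $\lambda_{1},\dots,\lambda_{n}$ are real, so summing over the finite index set $j$ (no interchange of limits is needed) gives
\[
En(G)=\frac{1}{\pi}\int_{-\infty}^{\infty}\sum_{j=1}^{n}\frac{\lambda_{j}^{2}}{x^{2}+\lambda_{j}^{2}}\,dx
=\frac{1}{\pi}\int_{-\infty}^{\infty}\Bigl(n-x^{2}\sum_{j=1}^{n}\frac{1}{x^{2}+\lambda_{j}^{2}}\Bigr)\,dx.
\]
The combined integrand behaves like $O(x^{-2})$ at infinity, so the integral converges; as it is even in $x$, I would halve the domain to $(0,\infty)$ and absorb a factor of $2$.

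Next I would introduce $\phi(G,x)=\prod_{j=1}^{n}(x-\lambda_{j})$ and observe that $|\phi(G,ix)|^{2}=\prod_{j}(x^{2}+\lambda_{j}^{2})$, so that $\sum_{j}(x^{2}+\lambda_{j}^{2})^{-1}=x^{-1}\frac{d}{dx}\ln|\phi(G,ix)|$ and the integrand becomes $n-x\frac{d}{dx}\ln|\phi(G,ix)|$. This is the one place where the hypothesis that $G$ is a forest enters: for an acyclic graph the only Sachs subgraphs are disjoint unions of edges, so by the Sachs coefficient theorem $\phi(G,x)=\sum_{k\ge 0}(-1)^{k}m(G,k)x^{n-2k}$. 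Evaluating at $x=ix$ and taking moduli then yields, for $x>0$ (where all coefficients are nonnegative),
\[
|\phi(G,ix)|=\sum_{k\ge 0}m(G,k)\,x^{\,n-2k}=x^{n}\,h(1/x),\qquad h(t):=\sum_{k\ge 0}m(G,k)\,t^{2k},
\]
where $h$ is exactly the polynomial appearing in the claimed formula.

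Finally I would substitute $x\mapsto 1/t$. A short computation turns $n-x\frac{d}{dx}\ln|\phi(G,ix)|$ into $t\,\frac{d}{dt}\ln h(t)$ and $dx$ into $-t^{-2}\,dt$, so the energy collapses to $\frac{2}{\pi}\int_{0}^{\infty}t^{-1}\,(\ln h(t))'\,dt$; a single integration by parts then produces $\frac{2}{\pi}\int_{0}^{\infty}t^{-2}\ln h(t)\,dt$, which is the stated identity. The \emph{main obstacle} is to confirm that the boundary terms of this integration by parts vanish and that the resulting integral converges. Here the acyclic structure pays off again through $h$: as $t\to 0$ one has $h(t)\to m(G,0)=1$, so $\ln h(t)\sim m(G,1)\,t^{2}$ and $t^{-1}\ln h(t)\to 0$; as $t\to\infty$ one has $\ln h(t)\sim 2\lfloor n/2\rfloor\ln t$, so again $t^{-1}\ln h(t)\to 0$. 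These estimates, together with the constant term $m(G,0)=1$ and the positivity of every $m(G,k)$, are precisely what make both the boundary evaluation trivial and the final integral $\int_{0}^{\infty}t^{-2}\ln h(t)\,dt$ absolutely convergent.
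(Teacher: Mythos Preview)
The paper does not supply its own proof of this theorem; it is quoted from Gutman--Polansky as a known tool, so there is nothing in the paper to compare against. Your derivation is the standard Coulson argument and is correct: the integral representation of $|\lambda|$, the identification of $\sum_j \lambda_j^2/(x^2+\lambda_j^2)$ with $n-x\frac{d}{dx}\ln|\phi(G,ix)|$, the Sachs coefficient theorem for forests giving $|\phi(G,ix)|=x^{n}h(1/x)$, the substitution $x\mapsto 1/t$, and the final integration by parts all go through, and your treatment of convergence and boundary terms is adequate.

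One small inaccuracy: at $t\to\infty$ you write $\ln h(t)\sim 2\lfloor n/2\rfloor\ln t$, but the correct asymptotic is $\ln h(t)\sim 2\mu(G)\ln t$, where $\mu(G)$ is the matching number of $G$ (the largest $k$ with $m(G,k)>0$); for an arbitrary forest this need not equal $\lfloor n/2\rfloor$ (e.g.\ a star has $\mu=1$). This does not affect the argument, since $t^{-1}\ln t\to 0$ and $\int^{\infty} t^{-2}\ln t\,dt$ converges regardless of the constant in front.
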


With $$M(G,x)=\sum_{k\geq0}m(G,k)x^{k},$$
we have 
\begin{equation}\label{energyoftrees2}
En(G)=\frac{2}{\pi}\int_{0}^{+\infty}\frac{1}{x^2}\ln M(G,x^{2})dx.
\end{equation}
It is clear from Equation \eqref{energyoftrees2}, that if $T$ and $T^{'}$ are trees and $M(T,x)\leq M(T^{'},x)$, for all positive $x\in\R$, then $En(T)\leq En(T^{'})$. If furthermore, there exists a positive $x\in\R$, such that $M(T,x)<M(T^{'},x)$, then $En(T)<En(T^{'})$.

The Hosoya index $Z(G)$ of a graph $G$ is defined \cite{[9HHOSOYA]} as the total number of matchings in $G$, that is,
\begin{equation}\label{HosoyaindexOriginalEquation}
Z(G)=\sum_{k\geq0}m(G,k).
\end{equation}
Hence, $Z(G)=M(G,1)$. If $G$ and $G^{'}$ are two graphs, such that for all positive $x\in\R$, $M(G,x)\leq M(G^{'},x)$, then $Z(G)\leq Z(G^{'})$. 
 
In addition to purely mathematical interest, the study of the energy and the Hosoya index is motivated by its applications in chemistry. See the book \cite{[BOOKGRAPHENERGY]} for more details.

The non-increasing sequence $D=(d_1,d_2,\dots ,d_n)$ of vertex degrees of a graph $G$ is called the degree sequence of $G$. 
The reduced degree sequence of a graph $G$ is a non-increasing sequence obtained by removing all the $1$'s in the degree sequence of $G$. The set of all trees with degree sequence $D$ will be denoted by $\mathbb{T}(D)$.
$\mathbb{T}(D)$ is a well studied class of graphs relative to various graph invariants. See for example \cite{[1],[2],[385],[69W],[316],[344],Andriantiana20211}. 

For $n\geq 1$, let $a_1,a_2,\dots ,a_n$ be non-negative integers, such that $a_1$ and $a_n$ are positive, the tree obtained from the path graph, with vertices $v_1,v_2,\dots ,v_n$ in this order from one end to the other, by attaching $a_i$ new leaves to $v_i$, for $1\leq i\leq n$, is called a $(a_1,a_2,\dots ,a_n)$-caterpillar, and is denoted by $C(a_1+1,a_2+2,\dots ,a_{n-1}+2,a_n+1)$. The set of all caterpillars with degree or reduced degree sequence $D$ is denoted by $\mathbb{C}_{D}$. 

In \citep{Andriantiana20211}, a characterisation of the tree with given degree sequence, minimum energy and minimum Hosoya index is provided. Theorems in \citep{Andriantiana20211} can be used to various classes of trees such as $n$-vertex trees with maximum degree or given number of vertices of degree $1$. Unfortunately, they are not enough to provide the structure of caterpillars with degree sequence $D$ and minimum energy or minimum Hosoya index. In this paper, we modify techniques used in \citep{Andriantiana20211,Heuberger2009214} to be suitable for caterpillars, and use it to obtain characterisation of the caterpillar $\mathcal{S}(D)$ with degree sequence $D$ which has the minimum Hosoya index and minimum energy. A preliminary section for technical definitions and lemmas is provided before the main section, where the structure of $\mathcal{S}(D)$ is described. In  the last section, we compare $\mathcal{S}(D)$ and $\mathcal{S}(D')$ for different $D$ and $D'$ and obtained corollaries on caterpillars with various degree restrictions.
 

\section{Preliminaries}
Technical lemmas and terminology in this section will play crucial roles in the rest of the paper.

Let $G$ be a 
graph. The neighbourhood of $v$ in $G$ is denoted by $N_{G}(v)$, and $N{(v)}$ when there is no necessity to specify the graph $G$. The set of vertices $v$ and its neighbours is denoted by $N_{G}[v]$ or $N[v]$ (i.e $N_{G}[v]=\{v\}\cup N{(v)}$). The degree of $v$ is defined as $|N(v)|$ and denoted by $\deg(v)$.

We say $G$ is minimal with regard to an invariant $f(G)$, if and only if $f(G)=\min\{f(C):C\in\mathbb{C}_{D}\}$. 


The subtree $B$ of a tree $T$ is called a complete branch of $T$ if and only if $T-V(B)$ is connected.
When a complete branch $B$ of $T$ is considered as a rooted tree, we choose its vertex that has a neighbour in $T-B$ to be the root, and we denote it by $r(B)\in V(B)$. The degree of $r(B)$ is denoted by $\rd(B)$. Note that the degree of the vertex $r(B)$ in $T$ is $rd(B)+1$. If $B_1,B_2,\dots ,B_{\rd(B)}$ are the connected components of $B-r(B)$, we write $B=[B_1,B_2,\dots ,B_{\rd(B)}]$. For any two complete branches $B_1$ and $B_2$ we write $B_1\approx _rB_2$ if and only if there exists a graph isomorphism $f:V(B_1)\rightarrow V(B_2)$ which preserves the roots, that is $f(r(B_1))=r(B_2)$. 

\begin{defn}
A non-leaf vertex in a tree $T$, that has at most one neighbour of degree greater than $1$ is called a pseudo-leaf. A complete branch $B$ is called a pseudo-leaf branch if its root is a pseudo-leaf. A pseudo-leaf branch with $d$ vertices is denoted by $[d]$.
\end{defn}


Recall the following well-known properties of $M(G,x)$. See \cite{[EODANDRIANTIANA]} for a proof. 
\begin{lem}
\label{lemma1p}
Let $G$ and $G^{'}$ be two disjoint graphs and let $x>0$ be a real number. Then, we have \begin{equation} M(G\cup G^{'},x)=M(G,x)M(G^{'},x).\label{3.17}\end{equation}If $v\in V(G)$, then we have \begin{equation}
M(G,x)=M(G-v,x)+x\sum_{w\in N_G(v)}M(G-\{v,w\},x)\label{3.18}.
\end{equation}For any $uv\in E(G)$ we have \begin{equation}
M(G,x)=M(G-uv,x)+xM(G-\{u,v\},x).\label{3.19}
\end{equation}
\end{lem}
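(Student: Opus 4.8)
The plan is to prove all three identities combinatorially, at the level of the matching counts $m(G,k)$, and then repackage them as statements about the generating function $M(G,x)=\sum_{k\ge 0} m(G,k)x^{k}$ (with the convention $m(G,0)=1$ for the empty matching and $m(G,k)=0$ for $k<0$). In each case I would partition the set of all matchings of the left-hand graph according to a single structural choice, count the resulting classes by size, and recognise the outcome as the coefficient extraction of the claimed right-hand side.

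For \eqref{3.17} I would use that $G$ and $G'$ are disjoint, so no edge of $G\cup G'$ has one end in $G$ and the other in $G'$. Hence every matching $M$ of $G\cup G'$ decomposes uniquely as $M=M_1\cup M_2$ with $M_1=M\cap E(G)$ a matching of $G$ and $M_2=M\cap E(G')$ a matching of $G'$, and conversely any such pair gives a matching of the union. Splitting by sizes yields $m(G\cup G',k)=\sum_{i+j=k}m(G,i)m(G',j)$, which is exactly the coefficient of $x^{k}$ in the Cauchy product $M(G,x)M(G',x)$; summing over $k$ gives \eqref{3.17}.

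For \eqref{3.18} I would fix $v\in V(G)$ and split matchings of $G$ according to whether $v$ is saturated. If $v$ is unsaturated, $M$ uses no edge at $v$, so $M$ is precisely a matching of $G-v$. If $v$ is saturated, there is a unique neighbour $w\in N_G(v)$ with $vw\in M$, and then $M\setminus\{vw\}$ is a matching of $G-\{v,w\}$ of size one smaller; for fixed $w$ this is a bijection. This gives $m(G,k)=m(G-v,k)+\sum_{w\in N_G(v)}m(G-\{v,w\},k-1)$, where the size shift $k\mapsto k-1$ is tracked by the factor $x$; summing over $k$ produces \eqref{3.18}. Identity \eqref{3.19} is the same argument applied to a single edge: I would split matchings of $G$ by whether $uv\in M$. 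Those avoiding $uv$ are exactly the matchings of $G-uv$, and those containing $uv$ correspond, after removing $uv$, to matchings of $G-\{u,v\}$ of size one less, contributing the factor $x$. This yields $m(G,k)=m(G-uv,k)+m(G-\{u,v\},k-1)$ and hence \eqref{3.19}.

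The arguments are elementary, so the only real care is with boundary and degenerate terms. I would check the $k=0$ contributions against the convention $m(\cdot,0)=1$, confirm that the size-shift bookkeeping in \eqref{3.18} and \eqref{3.19} is consistent there, and verify the degenerate cases: an empty factor in \eqref{3.17}, and an isolated $v$ in \eqref{3.18} (empty sum, reducing correctly to $M(G,x)=M(G-v,x)$), cause no problem. Since each partition is a genuine disjoint union of matchings with an explicit bijection on each block, no step is a true obstacle; the main thing to get right is simply that the $x$ in \eqref{3.18} and \eqref{3.19} accounts precisely for the edge added back when passing from a matching of a reduced graph to one of $G$.
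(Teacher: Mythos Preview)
Your argument is correct and is precisely the standard combinatorial proof of these identities. Note that the paper does not actually give a proof of this lemma; it is stated as well known and a reference is cited, so there is no alternative approach in the paper to compare against.
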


For every complete branch $B$ of a tree, we define $m_0(B,k)$ to be the number of matchings of order $k$ in $B$, not covering the root $r(B)$, $M_0(B,x)=\sum_{k\geq0}m_0(B,k)x^k$, and $\tau(B,x)=\frac{M_0(B,x)}{M(B,x)}$. Repeated use of Lemma \ref{lemma1p} leads to the following lemma.

\begin{lem}[\cite{[1]}]\label{lemma2p}
Let $B=[B_1,\dots ,B_{\rd(B)}]$ be a complete branch of a tree. Then, for all positive $x$ we have \begin{equation}\displaystyle{\tau (B,x)=\frac{1}{1+x\sum_{i=1}^{\rd(B)}\tau (B_i,x)}}.\label{6}\end{equation}It is convenient to set $\tau (\emptyset ,x)=0$ for all $x>0$, so that the recurrence formula \eqref{6} still holds if some of the $B_i$'s are empty.
\end{lem}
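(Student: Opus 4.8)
The plan is to expand both $M_0(B,x)$ and $M(B,x)$ in terms of the child branches $B_1,\dots,B_{\rd(B)}$ via Lemma~\ref{lemma1p}, and then divide. Throughout write $d=\rd(B)$, let $r=r(B)$ be the root, and let $r_i=r(B_i)$ denote the unique neighbour of $r$ lying in $B_i$, so that $N_B(r)=\{r_1,\dots,r_d\}$ and the connected components of $B-r$ are exactly $B_1,\dots,B_d$.

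First I would note that $M_0(B,x)=\prod_{i=1}^{d}M(B_i,x)$. Indeed, a matching of $B$ that does not cover $r$ contains no edge incident with $r$, hence decomposes uniquely as a disjoint union of matchings, one in each component $B_i$ of $B-r$, and conversely; the product formula follows from \eqref{3.17}. Equivalently, $M_0(B,x)=M(B-r,x)$ and the latter factors by \eqref{3.17}. Similarly I would record $M_0(B_i,x)=M(B_i-r_i,x)$ for each $i$, since matchings of $B_i$ avoiding $r_i$ are precisely matchings of $B_i-r_i$.

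Next I would apply the vertex recurrence \eqref{3.18} at $v=r$:
\begin{equation*}
M(B,x)=M(B-r,x)+x\sum_{i=1}^{d}M(B-\{r,r_i\},x).
\end{equation*}
For each $i$ the graph $B-\{r,r_i\}$ is the disjoint union of $B_i-r_i$ with the branches $B_j$, $j\neq i$, so by \eqref{3.17} together with the identities above,
\begin{equation*}
M(B,x)=\prod_{i=1}^{d}M(B_i,x)+x\sum_{i=1}^{d}M_0(B_i,x)\prod_{j\neq i}M(B_j,x).
\end{equation*}
Since each $M(B_i,x)$ has nonnegative coefficients and constant term $1$, it is positive for $x>0$; dividing the last display by $\prod_{i=1}^{d}M(B_i,x)=M_0(B,x)$ gives $M(B,x)/M_0(B,x)=1+x\sum_{i=1}^{d}\tau(B_i,x)$, which is exactly \eqref{6} after taking reciprocals.

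Finally I would dispose of the degenerate cases. If $d=0$ then $B$ is the single vertex $r$, so $M(B,x)=M_0(B,x)=1$ and \eqref{6} reads $1=1/(1+0)$. If some $B_i$ is empty, it contributes a factor $1$ to every product and a term $0$ to every sum above, which is precisely the effect of the convention $\tau(\emptyset,x)=0$, so the recurrence persists. I do not anticipate a genuine obstacle here; the only steps needing a little care are the bijective argument yielding the \emph{exact} product formula for $M_0(B,x)$ (consistently with the empty-branch convention) and the positivity of $M(B_i,x)$ for $x>0$ that licenses the division.
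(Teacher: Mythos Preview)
Your proof is correct and is precisely the argument the paper has in mind: the paper does not write out a proof but states just before the lemma that ``Repeated use of Lemma~\ref{lemma1p} leads to the following lemma,'' and your computation carries out exactly that---applying \eqref{3.17} to factor $M_0(B,x)=M(B-r,x)$ and \eqref{3.18} at the root to expand $M(B,x)$, then dividing.
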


\begin{lem}[\cite{[1]}]\label{lemma3p}
Let $B$ be a complete branch of a tree and $x>0$. Then, $$\frac{1}{1+x\rd(B)}\leq\tau (B,x)\leq 1.$$
\end{lem}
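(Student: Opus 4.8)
The plan is to prove the two inequalities separately, using the combinatorial definition of $\tau$ for the upper bound and the recurrence of Lemma~\ref{lemma2p} for the lower bound.

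First I would establish that $\tau(B,x)\le 1$ for \emph{every} complete branch $B$ and every $x>0$. By definition $\tau(B,x)=M_0(B,x)/M(B,x)$, and since every matching of $B$ not covering $r(B)$ is in particular a matching of $B$, we have $m_0(B,k)\le m(B,k)$ for all $k\ge 0$; as $x>0$, this gives $M_0(B,x)\le M(B,x)$ term by term. Moreover $m(B,0)=1$, so $M(B,x)\ge 1>0$ and the quotient is well defined; hence $0<\tau(B,x)\le 1$. (Alternatively, the upper bound is immediate from \eqref{6}, whose denominator is at least $1$.) The key point is that this bound is valid for all complete branches with no hypothesis on their structure, so it applies in particular to each connected component $B_i$ of $B-r(B)$; no induction is needed.

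For the lower bound, write $B=[B_1,\dots,B_{\rd(B)}]$ and apply \eqref{6}: $\tau(B,x)=\bigl(1+x\sum_{i=1}^{\rd(B)}\tau(B_i,x)\bigr)^{-1}$. By the previous paragraph each $\tau(B_i,x)\le 1$, so $\sum_{i=1}^{\rd(B)}\tau(B_i,x)\le \rd(B)$, and since $x>0$ the denominator satisfies $1+x\sum_{i=1}^{\rd(B)}\tau(B_i,x)\le 1+x\,\rd(B)$. Taking reciprocals of these positive quantities yields $\tau(B,x)\ge \frac{1}{1+x\,\rd(B)}$, which completes the proof.

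There is essentially no serious obstacle here. The only points requiring a little care are the degenerate situation where some $B_i=\emptyset$ (e.g.\ when $r(B)$ is a leaf), which is handled by the convention $\tau(\emptyset,x)=0$ built into Lemma~\ref{lemma2p}, and the fact that $M(B,x)\ge 1>0$ for $x>0$, which makes the divisions legitimate throughout.
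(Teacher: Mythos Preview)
Your proof is correct. Note that the paper itself does not supply a proof of this lemma; it is quoted from the cited reference and stated without argument. Your approach---the termwise inequality $m_0(B,k)\le m(B,k)$ for the upper bound, followed by the recurrence~\eqref{6} together with $\tau(B_i,x)\le 1$ for the lower bound---is the standard one and is essentially what one finds in the source.
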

\begin{rem}\label{remark4p}
Note that the upper bound $1$ is reached only if $B$ is a leaf. It follows that the lower bound is  obtained only if $B$ is a pseudo-leaf branch.
\end{rem}
\section{Caterpillars with given degree sequence and small $M(.,x)$, energy and Hosoya index}
In this section, 
we characterize caterpillars with given degree sequence and smallest $M(.,x)$, and hence smallest Energy and Hosoya index.

Let $G$ be a caterpillar and be decomposed as in Figure \ref{Figure-1}. Then, using Equations \eqref{3.17} and \eqref{3.19} we have:
\begin{figure}[htbp]
\centering
\includegraphics[scale=.5]{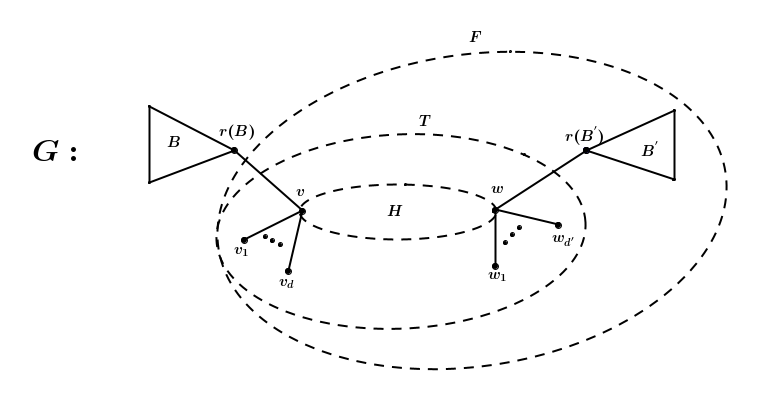}
\caption{Decomposition of a caterpillar for sections 3 and 4.}
\label{Figure-1}
\end{figure}

\begin{align*}
&M(G,x)\\
&=M(G-vr(B),x)+xM(G-\{r(B),v\},x)\\
&=M(B,x)M(F,x)+xM(G-\{r(B),v\},x)\\
&=M(B,x)\left[M(F-wr(B^{'}),x)+xM(F-\{w,r(B^{'})\},x)\right]
+xM(G-\{r(B),v\},x)\\
&=M(B,x)\left[M(T,x)M(B^{'},x)+xM(F-\{w,r(B^{'})\},x)\right]
+xM(G-\{r(B),v\},x)\\
&=M(B,x)\left[M(T,x)M(B^{'},x)+xM(T-w,x)M(B^{'}-r(B^{'}),x)\right]\\
&\pushright{+xM(B-r(B),x)M(F-v,x)}\\
&=M(B,x)\left[M(T,x)M(B^{'},x)+xM(T-w,x)M(B^{'}-r(B^{'}),x)\right]\\
&\pushright{+xM(B-r(B),x)\left[M(F-v-wr(B^{'}),x)+xM(F-v-\{w,r(B^{'})\},x)\right]}\\
&=M(B,x)\left[M(T,x)M(B^{'},x)+xM(T-w,x)M(B^{'}-r(B^{'}),x)\right]\\
&+xM(B-r(B),x)\left[M(T-v,x)M(B^{'},x)+xM(T-\{w,v\},x)M(B^{'}-r(B^{'}),x)\right]\\
&=M(B,x)M(B^{'},x)M(T,x)+xM(B,x)M(B^{'}-r(B^{'}),x)M(T-w,x)\\
&+xM(B^{'},x)M(B-r(B),x)M(T-v,x)\\
&\pushright{+x^2M(B-r(B),x)M(B^{'}-r(B^{'}),x)M(T-\{v,w\},x).}
\end{align*}
Further iterative use of Equations \eqref{3.17} and \eqref{3.19} gives

\begin{align*}
M(T,x)
&=M(T-\{v_1,\dots ,v_d\}-\{w_1,\dots ,w_{d'}\},x)+d'xM(T-\{v_1,\dots ,v_d\}-w,x)\\
&\pushright{+dxM(T-v,x)}\\
&=M(H,x)+d'xM(H-w,x)+dx\left[M(T-v-\{w_1,\dots ,w_{d'}\},x)\right.\\
&\hspace*{9.5cm}\left.+d'xM(T-v-w,x)\right]\\
&=M(H,x)+d'xM(H-w,x)+dx\left[M(H-v,x)+d'xM(H-\{v,w\},x)\right],
\end{align*}

\begin{align*}
M(T-w,x)
&=M(T-w-\{v_1,\dots ,v_d\},x)+dxM(T-w-v-\{v_1,\dots ,v_d\},x)\\
&=M(H-w,x)+dxM(H-w-v,x)\\
\end{align*}
and 
$$M(T-v,x)=M(H-v,x)+d^{'}xM(H-w-v,x).$$ Hence,
\begin{align}
&M(G,x)\nonumber\\
&=M(B,x)M(B^{'},x)\left[M(H,x)+d^{'}xM(H-w,x)+dx\left[M(H-v,x)+d^{'}xM(H-\{v,w\},x)\right]\right]\nonumber\\
&\pushright{+xM(B,x)M(B^{'}-r(B^{'}),x)\left[M(H-w,x)+dxM(H-\{v,w\},x)\right]\nonumber}\\
&+xM(B^{'},x)M(B-r(B),x)\left[M(H-v,x)+d^{'}xM(H-\{v,v\},x)\right]\nonumber\\
&\pushright{+x^2M(B-r(B),x)M(B^{'}-r(B^{'}),x)M(H-\{v,w\},x)\nonumber}\\
&=M(B,x)M(B^{'},x)\left[M(H,x)+d^{'}xM(H-w,x)\right.\nonumber\\
&\hspace*{5.7cm}\left.+dx\left[M(H-v,x)+d^{'}xM(H-\{v,w\},x)\right]\right]\nonumber\\
&+xM(B,x)M(B^{'},x)\tau(B^{'},x)\left[M(H-w,x)+dxM(H-\{v,w\},x)\right]\nonumber\\
&\hspace*{1.6cm}+xM(B,x)M(B^{'},x)\tau(B,x)\left[M(H-v,x)+d^{'}xM(H-\{v,w\},x)\right]\nonumber\\
&+x^2M(B,x)M(B^{'},x)\tau(B,x)\tau(B^{'},x)M(H-\{v,w\},x)\nonumber\\
&=M(B,x)M(B^{'},x)\left[M(H,x)+d^{'}xM(H-w,x)\right.\nonumber\\
&\hspace*{5.8cm}\left.+dx\left[M(H-v,x)+d^{'}xM(H-\{v,w\},x)\right]\right.\nonumber\\
&+x\tau(B^{'},x)\left[M(H-w,x)+dxM(H-\{v,w\},x)\right]\nonumber\\
&\hspace*{4.8cm}+x\tau(B,x)\left[M(H-v,x)+d'xM(H-\{v,w\},x)\right]\nonumber\\
&+x^2\left.\tau(B,x)\tau(B^{'},x)M(H-\{v,w\},x)\right]\nonumber\\
&=M(B,x)M(B^{'},x)\left[M(H,x)+dd^{'}x^2M(H-\{v,w\},x)\right.\nonumber\\
&+x^2\tau(B,x)\tau(B^{'},x)M(H-\{v,w\},x)
+x\left[d^{'}M(H-w,x)+dM(H-v,x)\right]\nonumber\\
&+x\left[\tau(B^{'},x)M(H-w,x)+\tau(B,x)M(H-v,x)\right]\nonumber\\
&\hspace*{5.1cm}+x^{2}M(H-\{v,w\},x)\left.\left[d\tau(B^{'},x)+d^{'}\tau(B,x)\right]\right]\nonumber\\
&=M(B,x)M(B^{'},x)\left[M(H,x)+dd^{'}x^2M(H-\{v,w\},x)\right.\nonumber\\
&\pushright{\left.+x^2\tau(B,x)\tau(B^{'},x)M(H-\{v,w\},x)\right.\nonumber}\\
&+x\left[\left(d^{'}+\tau(B^{'},x)\right)M(H-w,x)+\left(d+\tau(B,x)\right)M(H-v,x)\right]\nonumber\\
&\pushright{+x^2M(H-\{v,w\},x)\left.\left[d\tau(B^{'},x)+d^{'}\tau(B,x)\right]\right].}\label{EquationMax}
\end{align}
We define
\begin{align*}
M_{v}^{w}(G,x)
&=x\left[\left(d^{'}+\tau(B^{'},x)\right)M(H-w,x)+\left(d+\tau(B,x)\right)M(H-v,x)\right]\\
&+x^2M(H-\{v,w\},x)\left[d\tau(B^{'},x)+d^{'}\tau(B,x)\right],
\end{align*}
so that
\begin{align}
M(G,x)
&=M(B,x)M(B^{'},x)\left[M(H,x)+dd^{'}x^2M(H-\{v,w\},x)\right.\nonumber\\
&\hspace*{1.4cm}+\left.x^2\tau(B,x)\tau(B^{'},x)M(H-\{v,w\},x)+M_{v}^{w}(G,x)\right].\label{M(G,x)}
\end{align}

\subsection{Caterpillars with given degree sequence and minimum $M(.,x)$}

In this section we characterize the caterpillar with given degree sequence $D$, minimum $M(.,x)$ and hence minimum Hosoya index and minimum energy. 


The following simple technical lemma will play central role as we try to find out what exchange of branches reduces $M(.,x)$.

\begin{lem}\label{minimal0}
Let $a,b,c$ and $d$ be nonnegative real numbers. If $a\leq b$ and $c\leq d$, then $ac+bd\geq ad+bc$. If $a\leq b\leq c\leq d$, then $ab+cd\geq ac+bd\geq ad+bc$.
\end{lem}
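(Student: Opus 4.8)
The plan is to prove both statements by elementary factorisation of the relevant differences, reducing each inequality to a product of two differences of the given numbers; nonnegativity of $a,b,c,d$ will in fact not be needed, only the stated orderings.

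For the first assertion I would simply compute
\[
(ac+bd)-(ad+bc)=a(c-d)+b(d-c)=(b-a)(d-c).
\]
The hypotheses $a\le b$ and $c\le d$ give $b-a\ge 0$ and $d-c\ge 0$, so the product is nonnegative, which is exactly $ac+bd\ge ad+bc$.

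For the second assertion, assume $a\le b\le c\le d$. The inequality $ac+bd\ge ad+bc$ is then immediate from the first part, since $a\le b$ and $c\le d$ both still hold. For the remaining inequality $ab+cd\ge ac+bd$ I would compute
\[
(ab+cd)-(ac+bd)=a(b-c)+d(c-b)=(d-a)(c-b),
\]
which is nonnegative because $a\le d$ and $b\le c$. Chaining the two estimates yields $ab+cd\ge ac+bd\ge ad+bc$.

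There is no genuine obstacle here: the only point requiring a little care is the sign bookkeeping in the two factorisations. One could alternatively invoke the rearrangement inequality, but the direct computation above is shorter and keeps the lemma self-contained, so that is the route I would take.
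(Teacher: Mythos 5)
Your proof is correct and is essentially the paper's own argument: both reduce the inequalities to the factorisations $(ac+bd)-(ad+bc)=(b-a)(d-c)$ and $(ab+cd)-(ac+bd)=(d-a)(c-b)$ and read off the signs from the hypotheses. Your observation that nonnegativity of $a,b,c,d$ is not actually needed is a minor, accurate extra remark.
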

\begin{proof}
The lemma follows from the fact that
\begin{align*}
ac+bd-(ad+bc)&=(b-a)(d-c)\quad \text{and} \quad
ab+cd-(ac+bd)=(d-a)(c-b).
\end{align*}
\end{proof}
\begin{lem}\label{lemma4m}
Let $B$ and $B^{'}$ be complete branches of a caterpillar $G$, $h(B)$ and $h(B^{'})$ their respective heights. If $h(B)=h(B^{'})$, then $\tau (B,x)=\tau (B^{'},x)$ only if $B\approx_rB^{'}$.
\end{lem}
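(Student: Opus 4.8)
# Proof Proposal for Lemma \ref{lemma4m}

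The plan is to prove the contrapositive-friendly form by induction on the common height $h = h(B) = h(B')$. Since $B$ and $B'$ are complete branches of a caterpillar, each is itself a caterpillar, so the branches that hang off the root $r(B)$ have a very restricted shape: all but at most one of them are pseudo-leaf branches, i.e. single leaves $[1]$, and the remaining one (if present) is again a caterpillar branch whose height is $h-1$.

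\textbf{Base case.} If $h(B) = h(B') = 0$ then both are leaves, so $B \approx_r B'$ trivially, and $\tau(B,x) = \tau(B',x) = 1$ by Lemma \ref{lemma3p} and Remark \ref{remark4p}. If $h(B) = h(B') = 1$ then $B = [a+1]$ and $B' = [a'+1]$ are pseudo-leaf branches whose roots have $a$ and $a'$ leaf-children respectively (in the notation of the excerpt, $\rd(B) = a$, $\rd(B') = a'$), and by Lemma \ref{lemma2p} with each $\tau([1],x) = 1$ we get $\tau(B,x) = \tfrac{1}{1+ax}$ and $\tau(B',x) = \tfrac{1}{1+a'x}$. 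These rational functions agree for all $x > 0$ only when $a = a'$, which forces $B \approx_r B'$.

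\textbf{Inductive step.} Suppose $h(B) = h(B') = h \ge 2$ and $\tau(B,x) = \tau(B',x)$ for all $x > 0$. Write the components of $B - r(B)$ as $k$ leaves together with one branch $C$ of height $h-1$, and similarly $B' - r(B')$ as $k'$ leaves together with one branch $C'$ of height $h-1$ (each must have exactly one non-leaf child since the height drops by exactly $1$ and $G$ is a caterpillar). By Lemma \ref{lemma2p},
\begin{equation*}
\tau(B,x) = \frac{1}{1 + x\bigl(k + \tau(C,x)\bigr)}, \qquad
\tau(B',x) = \frac{1}{1 + x\bigl(k' + \tau(C',x)\bigr)}.
\end{equation*}
Equality of the left-hand sides for all $x>0$ gives $k + \tau(C,x) = k' + \tau(C',x)$ for all $x>0$. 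Since $\tau(C,x), \tau(C',x) \in (0,1]$ by Lemma \ref{lemma3p}, taking $x \to 0^+$ (where both $\tau$ tend to $1$) forces $k = k'$, and hence $\tau(C,x) = \tau(C',x)$ for all $x>0$. Now $h(C) = h(C') = h-1$, so the induction hypothesis gives $C \approx_r C'$; combined with $k = k'$, the root-preserving isomorphism extends to $B \approx_r B'$.

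\textbf{Main obstacle.} The one point that needs care is the structural claim that in a caterpillar each complete branch, rooted at its attachment vertex, has \emph{at most one} child that is not a single leaf, and that this non-leaf child (when it exists) has height exactly one less than the branch. This is where the caterpillar hypothesis is essential — for a general tree the two branches $C, C'$ would be replaced by tuples of branches, $\tau(B,x)$ would only determine the \emph{sum} $\sum_i \tau(C_i,x)$, and one could not conclude componentwise equality. I would state and justify this structural observation (it follows directly from the definition of caterpillar: deleting all leaves leaves a path, so the spine restricted to a branch is a subpath and only the spine vertices can have non-leaf neighbours) before running the induction, so that the recursion in Lemma \ref{lemma2p} collapses to the two-term form above. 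The remaining steps are routine comparisons of rational functions in $x$.
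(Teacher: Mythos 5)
Your overall strategy is the same as the paper's: induction on the common height, writing the branch as a root with some leaf children plus (for height $\ge 2$) a single non-leaf child, and using the recursion of Lemma \ref{lemma2p} to reduce to that child. The structural point you flag is worth flagging, but note two things: strictly, a complete branch rooted at an interior spine vertex (with only a leaf or a star outside it) can have \emph{two} non-leaf children, so your "at most one non-leaf child" claim really needs the branches to be one-sided; however, the paper's own proof makes exactly the same assumption through its Figure \ref{Figure-2} decomposition, and the branches $H_L^{u_i}$, $H_R^{u_i}$ to which the lemma is later applied are of this one-sided form, so you are not worse off than the paper here.

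The one genuine gap is the step where you deduce $k=k'$ by letting $x\to 0^+$. That argument needs $\tau(B,x)=\tau(B',x)$ to hold for all $x>0$ (or at least on a neighbourhood of $0$), i.e.\ it proves only the ``for all $x$'' version of the lemma. But the lemma is needed, and is proved in the paper, for an arbitrary \emph{fixed} $x>0$: in the proof of Theorem \ref{minimal theorem} the caterpillar $H$ is minimal with respect to $M(\cdot,x)$ for one given $x$, and the conclusion $H_L^{u_j}\approx_r H_R^{u_{n-j+1}}$ is drawn from equality of the two $\tau$'s at that single $x$, where no limit can be taken. The repair is elementary and is exactly the paper's argument: at the fixed $x$, if $k\neq k'$, say $k\ge k'+1$, then $k+\tau(C,x)>k'+1\ge k'+\tau(C',x)$ because $0<\tau(C,x)$ and $\tau(C',x)\le 1$ by Lemma \ref{lemma3p}, contradicting $k+\tau(C,x)=k'+\tau(C',x)$; hence $k=k'$, whence $\tau(C,x)=\tau(C',x)$ at that same $x$, and the induction hypothesis in its fixed-$x$ form finishes the step. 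The same remark applies to your base case, where equality at a single $x>0$ already forces $a=a'$ without any appeal to all $x$.
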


\begin{proof}

Suppose $h(B)=h(B^{'})=h$. If $h=1$ and $\tau (B,x)=\tau (B^{'},x)$, then $$\tau (B,x)=\frac{1}{1+x\rd(B)}=\frac{1}{1+x\rd(B^{'})}=\tau (B^{'},x).$$ Then, $1+x\rd(B)=1+x\rd(B^{'})$. Since $x>0$, $\rd(B)=\rd(B^{'})$ and $B\approx_rB^{'}$. 

Suppose that for $h=k\geq 1$, $\tau (B,x)=\tau (B^{'},x)$ implies $B\approx_rB^{'}$. Now, consider the case of $h=k+1\geq 2$.
$B$ and $B^{'}$ can be decomposed as in Figure \ref{Figure-2}.
\begin{figure}[htbp]
\centering
\includegraphics[scale=.5]{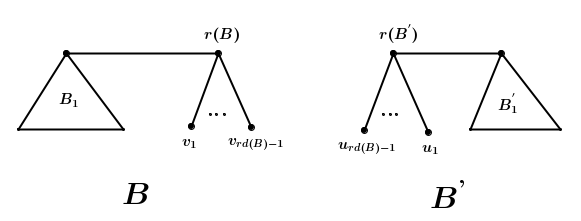}
\caption{Decomposition of complete branches in Lemma \ref{lemma4m}.}
\label{Figure-2}
\end{figure}
Then we have
 $$\displaystyle{\tau (B,x)=\frac{1}{1+x\sum_{i=1}^{\rd(B)}\tau (B_i,x)}=\frac{1}{1+x\left(\rd(B)-1+\tau (B_1,x)\right)}}$$ and $$\tau (B^{'},x)=\frac{1}{1+x\left(\rd(B^{'})-1+\tau (B_1^{'},x)\right)}.$$ From $\tau (B,x)=\tau (B^{'},x)$, we have $1+x\left(\rd(B)-1+\tau (B_1,x)\right)=1+x\left(\rd(B^{'})-1+\tau (B_1^{'},x)\right)$ and thus \begin{equation}\label{1}\rd(B)+\tau (B_1,x)=\rd(B^{'})+\tau (B^{'}_1,x).\end{equation}Suppose that $\rd(B)\neq \rd(B^{'})$. Without loss of generality, assume $\rd(B)$$>\rd(B^{'})$. Then, $\rd(B)\geq \rd(B^{'})+1$. Since  $h\geq 2$, the branches $B_1$ and $B_{1}^{'}$ are non-empty and $0<\tau (B_1,x),\tau (B^{'}_1,x)\leq 1$. Hence,
\begin{equation*}
\rd(B)+\tau (B_1,x)>\rd(B^{'})+1\geq \rd(B^{'})+\tau (B^{'}_1,x).
\end{equation*}
But this would contradict \eqref{1}. So, we must have $\rd(B)=\rd(B^{'})$. Combined with \eqref{1}, this gives  $\tau (B^{'}_1,x)=\tau (B_1,x)$ and $\tau (B^{'}_1,x)=\tau (B_1,x)$, which implies $B_1\approx_rB^{'}_1$, and thus $B\approx_rB^{'}$.
\end{proof}

\begin{lem}\label{minimal1}
Let $B$ and $B'$ be non-empty complete branches of a caterpillar $G$ with degree sequence $D$, attached at $v$ and $w$ respectively.  Suppose that $M(G,x)\leq M(T,x)$ for all $T\in\mathbb{C}_D$ for some $x>0$. Let $d$ and $d'$ be the number of leaves attached to $v$ and $w$, respectively.
\begin{itemize}
\item[i)] If $\tau(B^{'},x)>\tau(B,x)$, then $d^{'}\geq d$ and $M(H-w,x)\leq M(H-v,x)$.
\item[ii)] If $d^{'}>d$ and $\tau(B^{'},x)\neq\tau(B,x)$, then $\tau(B^{'},x)>\tau(B,x)$ and $M(H-w,x)\leq M(H-v,x)$.
\item[iii)] If $d^{'}>d$ and $\tau(B^{'},x)=\tau(B,x)$, then $M(H-w,x)\leq M(H-v,x)$.
\end{itemize}
\end{lem}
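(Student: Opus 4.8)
The strategy is to exploit the key identity \eqref{M(G,x)} together with the extremality hypothesis $M(G,x)\le M(T,x)$ for all $T\in\mathbb{C}_D$. The natural move is to swap the branches $B$ and $B'$ between $v$ and $w$: let $G^{*}$ be the caterpillar obtained from $G$ by attaching $B$ at $w$ and $B'$ at $v$ (this keeps the degree sequence $D$, since the degrees of $v$ and $w$ are unchanged — we are exchanging a branch of root-degree $\rd(B)$ with one of root-degree $\rd(B')$, but the leaf-counts $d,d'$ and the spine are untouched, so $G^{*}\in\mathbb{C}_D$). Applying \eqref{M(G,x)} to both $G$ and $G^{*}$, the common factor $M(B,x)M(B',x)$ and the terms $M(H,x)$, $dd'x^{2}M(H-\{v,w\},x)$, and $x^{2}\tau(B,x)\tau(B',x)M(H-\{v,w\},x)$ are all symmetric under the swap, so they cancel. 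What remains is
\[
M(G,x)-M(G^{*},x)=M(B,x)M(B',x)\bigl[M_{v}^{w}(G,x)-M_{v}^{w}(G^{*},x)\bigr].
\]
Since $M(G,x)\le M(G^{*},x)$ and the prefactor $M(B,x)M(B',x)>0$, we get $M_{v}^{w}(G,x)\le M_{v}^{w}(G^{*},x)$, i.e.
\[
x\bigl(d'+\tau(B',x)\bigr)M(H-w,x)+x\bigl(d+\tau(B,x)\bigr)M(H-v,x)
\le x\bigl(d'+\tau(B,x)\bigr)M(H-w,x)+x\bigl(d+\tau(B',x)\bigr)M(H-v,x),
\]
because swapping $B\leftrightarrow B'$ also swaps their $\tau$-values in the coefficients attached to $M(H-w,x)$ and $M(H-v,x)$ — note the $x^{2}M(H-\{v,w\},x)[d\tau(B',x)+d'\tau(B,x)]$ term is again symmetric and cancels. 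Dividing by $x>0$ and rearranging yields the single clean inequality
\[
\bigl(\tau(B',x)-\tau(B,x)\bigr)\bigl(M(H-v,x)-M(H-w,x)\bigr)\ge 0. \tag{$\star$}
\]

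From $(\star)$ the three parts follow by sign-chasing, but $(\star)$ alone only gives each as a disjunction ($\tau(B',x)>\tau(B,x)$ \emph{or} $M(H-w,x)\le M(H-v,x)$, etc.), so a second swap is needed to pin down the full conclusions. For part (i), assume $\tau(B',x)>\tau(B,x)$; then $(\star)$ immediately gives $M(H-w,x)\le M(H-v,x)$. To get $d'\ge d$, I would run a second exchange: move a single leaf from $v$ to $w$ (allowed as an intermediate step) or, more carefully, compare $G$ with the caterpillar $\widehat G$ obtained by swapping the \emph{leaf-assignments} — but since that changes which vertex has degree $d+1$ versus $d'+1$, the cleaner route is to combine the branch-swap bound with the leaf-count: redo the computation treating $d,d'$ as swappable, obtaining the analogue of $(\star)$ in the form $(d'-d)(M(H-v,x)-M(H-w,x))\ge 0$ (the $x$-linear terms in $M_v^w$ contribute $x\,d'M(H-w)+x\,dM(H-v)$, whose swap-difference is $x(d'-d)(M(H-v)-M(H-w))$; the $x^2$ term contributes $x^2(d-d')(\tau(B')-\tau(B))M(H-\{v,w\})$, which has a definite sign once we know $\tau(B')>\tau(B)$, and one checks the inequality still forces $d'\ge d$ once $M(H-w)\le M(H-v)$ is established). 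Part (ii) is the reverse implication: assume $d'>d$ and $\tau(B',x)\ne\tau(B,x)$; if $\tau(B',x)<\tau(B,x)$ then part (i) applied with the roles of $B,B'$ reversed would give $d\ge d'$, contradicting $d'>d$, so $\tau(B',x)>\tau(B,x)$, and then $M(H-w,x)\le M(H-v,x)$ by part (i). Part (iii): if $d'>d$ and $\tau(B',x)=\tau(B,x)$, then $(\star)$ is vacuous, so I would instead use the $(d'-d)$-version of the inequality, which with $\tau(B')=\tau(B)$ still reads $x(d'-d)(M(H-v,x)-M(H-w,x))\ge 0$ after the symmetric terms cancel, giving $M(H-w,x)\le M(H-v,x)$ directly since $d'-d>0$.

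The main obstacle I anticipate is bookkeeping in the swap-difference of $M_v^w$: one must verify that exactly the right terms cancel and that the surviving terms factor as a product of two differences with controllable signs. In particular, the $x^{2}M(H-\{v,w\},x)[d\tau(B',x)+d'\tau(B,x)]$ term is symmetric under the pure branch-swap but \emph{not} under the leaf-swap, so for parts that need $d'\ge d$ (part (i)) and for part (iii) one has to carry this term and check its sign contribution — here Lemma \ref{minimal0} is the natural tool, since each relevant difference is of the form $(\text{larger}-\text{smaller})$ of two quantities and Lemma \ref{minimal0} converts a product-of-sums comparison into a product-of-differences. A secondary subtlety is ensuring every intermediate caterpillar used in a swap genuinely lies in $\mathbb{C}_D$; for the branch-swap this is immediate, and for the leaf-count argument one should phrase it as comparing $G$ to the specific member of $\mathbb{C}_D$ with $B,B'$ and the leaves reassigned consistently, rather than as a non-graph "partial" move. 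Once $(\star)$ and its $(d'-d)$-companion are in hand, the three implications are a short case analysis using Lemma \ref{lemma4m} only implicitly (we never need $\tau$ injectivity here, just the algebraic inequalities), so the real work is entirely in the careful derivation of these two inequalities from \eqref{M(G,x)}.
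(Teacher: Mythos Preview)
Your overall approach—comparing $G$ with the caterpillars in $\mathbb{C}_D$ obtained by swapping the branches $B\leftrightarrow B'$ and/or the leaf-counts $d\leftrightarrow d'$—is exactly the paper's, and your reductions for parts (ii) and (iii) are correct. But there is a concrete error in your derivation of $(\star)$ for part (i).

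You assert that the term $x^{2}M(H-\{v,w\},x)\bigl[d\tau(B',x)+d'\tau(B,x)\bigr]$ is symmetric under the pure branch-swap. It is not: swapping $B$ and $B'$ replaces $\tau(B',x)$ by $\tau(B,x)$ and vice versa, sending this term to $x^{2}M(H-\{v,w\},x)\bigl[d\tau(B,x)+d'\tau(B',x)\bigr]$, with difference $x^{2}M(H-\{v,w\},x)(d-d')\bigl(\tau(B',x)-\tau(B,x)\bigr)$. Consequently the branch-swap comparison only yields
\[
\bigl(\tau(B',x)-\tau(B,x)\bigr)\Bigl[\bigl(M(H-w,x)-M(H-v,x)\bigr)+xM(H-\{v,w\},x)(d-d')\Bigr]\le 0,
\]
not $(\star)$. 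Under the hypothesis $\tau(B',x)>\tau(B,x)$ this is a single inequality coupling the two unknowns $M(H-w,x)-M(H-v,x)$ and $d-d'$, so you cannot first extract $M(H-w,x)\le M(H-v,x)$ and then feed it into the leaf-swap step to conclude $d'\ge d$: that chain is circular.

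The repair is nearby. The term $d\tau(B',x)+d'\tau(B,x)$ \emph{is} invariant under the \emph{combined} swap (branches and leaf-counts together), so comparing $G$ with that caterpillar gives the clean inequality
\[
\bigl[(d'-d)+(\tau(B',x)-\tau(B,x))\bigr]\bigl(M(H-w,x)-M(H-v,x)\bigr)\le 0.
\]
A short case split using this together with the (correctly computed) branch-swap and leaf-swap inequalities then forces $d'\ge d$ and $M(H-w,x)\le M(H-v,x)$. The paper phrases the same idea a little differently: it splits $M_v^w$ into its three summands and observes, via Lemma~\ref{minimal0}, that once $\tau(B',x)>\tau(B,x)$ is given, all three summands are simultaneously minimized among the four swap-configurations precisely when $d'\ge d$ and $M(H-w,x)\le M(H-v,x)$; minimality of $G$ then pins down that configuration.
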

\begin{proof}
Recall that 
\begin{align*}
&M_{v}^{w}(G,x)=x\left[d^{'}M(H-w,x)+dM(H-v,x)\right]\\
&\hspace{3cm}+x\left[\tau(B^{'},x)M(H-w,x)+\tau(B,x)M(H-v,x)\right]\\
&\hspace{5cm}+x^2M(H-\{v,w\},x)\left[d\tau(B^{'},x)+d^{'}\tau(B,x)\right].
\end{align*}
Given that $\tau(B^{'},x)>\tau(B,x)$, by Lemma \ref{minimal0},
 $d\tau(B^{'},x)+d^{'}\tau(B,x)$ is smallest only if $d\leq d^{'}$; $\tau(B^{'},x)M(H-w,x)+\tau(B,x)M(H-v,x)$ is smallest only if $M(H-w,x)\leq M(H-v,x)$. By Lemma \ref{minimal0} again, the minimality of $d^{'}M(H-w,x)+dM(H-v,x)$ requires  $d^{'}\geq d$ and $M(H-w,x)\leq M(H-v,x)$. This proves i).
 
To prove ii), suppose $d^{'}>d$, $\tau(B^{'},x)\neq\tau(B,x)$. 
Suppose $\tau(B^{'},x)<\tau(B,x)$. Then, from i) we must have $d^{'}\leq d$. But $d^{'}>d$. This is a contradiction. Hence $\tau(B^{'},x)\geq\tau(B,x)$. Since $\tau(B^{'},x)\neq\tau(B,x)$, we must have $\tau(B^{'},x)>\tau(B,x)$. From i), this requires $M(H-w,x)\leq M(H-v,x)$. 

To prove iii), we proceed as in i). Since $\tau(B^{'},x)=\tau(B,x)$, then $M_{v}^{w}(G,x)$ is minimal only if the expression $d^{'}M(H-w,x)+dM(H-v,x)$ is minimal. Given that $d^{'}>d$, by Lemma \ref{minimal0}, we must have $M(H-w,x)\leq M(H-v,x)$. 
\end{proof}
\begin{lem}\label{minimal2}
Let $G$ be a caterpillar and $x$ a positive real number. Label all the non-leaf vertices in $G$ from left to right as $u_1,u_2,\dots ,u_n$. If $G$ is  minimal with respect to $M(.,x)$, then $u_1$ and $u_n$ have the largest degrees in $G$.
\end{lem}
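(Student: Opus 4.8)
The plan is to argue by contradiction, combining a degree‑interchange exchange with the decomposition \eqref{M(G,x)}. Suppose $G$ is minimal with respect to $M(\cdot,x)$ but the conclusion fails. By the left--right symmetry of $\mathbb{C}_D$ we may assume some interior vertex has degree larger than $\deg u_1$, and I would take $u_k$ ($2\le k\le n-1$) to be the one closest to $u_1$ with $\deg u_k>\deg u_1$, so that $\deg u_i\le\deg u_1$ for $2\le i\le k-1$. Let $G^\ast$ be the caterpillar obtained from $G$ by interchanging the degrees assigned to $u_1$ and $u_k$, i.e.\ by moving $\deg u_k-\deg u_1$ of the leaves at $u_k$ onto $u_1$; this lies in $\mathbb{C}_D$, so it suffices to prove $M(G^\ast,x)<M(G,x)$.

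To compare the two I would feed $v=u_1$, $w=u_k$ into \eqref{M(G,x)}, taking $B$ and $B'$ to be single pendant edges at $u_1$ and at $u_k$ (these exist since $\deg u_1\ge 2$ and $\deg u_k\ge 3$). Then $\tau(B,x)=\tau(B',x)=1$, and the graph $H$ obtained from $G$ by deleting every leaf at $u_1$ and every leaf at $u_k$ is unchanged by the interchange, so \eqref{M(G,x)} collapses to
\[
M(G,x)=M(H,x)+a_1a_kx^2\,M(H-u_1-u_k,x)+a_1x\,M(H-u_1,x)+a_kx\,M(H-u_k,x),
\]
where $a_1,a_k$ are the numbers of leaves at $u_1,u_k$ in $G$; the analogous identity for $G^\ast$ has $a_1,a_k$ replaced by $\deg u_k-1$ and $\deg u_1-2$. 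Since $M(H,x)$, $M(H-u_1,x)$, $M(H-u_k,x)$, $M(H-u_1-u_k,x)$ are the same for both graphs, subtracting and cancelling the $M(H,\cdot)$ terms leaves
\[
M(G,x)-M(G^\ast,x)=(\deg u_k-\deg u_1)\,x\Bigl[x\,M(H-u_1-u_k,x)+M(H-u_k,x)-M(H-u_1,x)\Bigr],
\]
and since $\deg u_k-\deg u_1>0$ the whole problem reduces to showing the bracket is positive.

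For the bracket I would unfold the matching polynomials with Lemma~\ref{lemma1p}: in $H$ the vertex $u_1$ is a pendant (its only neighbour is $u_2$) and $u_k$ is a leaf‑free cut vertex, so expanding $M(H-u_k,x)$ at the pendant $u_1$ and $M(H-u_1,x)$ at $u_k$ via \eqref{3.18}, then cancelling, turns the bracket into $x$ times
\[
M(F,x)+M(F-u_2,x)-M(F-u_{k-1},x)-M(F-u_{k+1},x),
\]
where $F=H-u_1-u_k$ is the disjoint union of the caterpillar on $u_2,\dots,u_{k-1}$ and the caterpillar on $u_{k+1},\dots,u_n$ (so $u_2,u_{k-1}$ are the spine ends of the first, $u_{k+1}$ a spine end of the second). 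Here $M(F,x)>M(F-u_{k+1},x)$ strictly, because $u_{k+1}$ carries an edge in $F$ and deleting such a vertex strictly decreases $m(\cdot,1)$, hence $M(\cdot,x)$ for $x>0$; the remaining point is to compare $M(F-u_2,x)$ with $M(F-u_{k-1},x)$, i.e.\ the matching polynomials of the caterpillar on $u_2,\dots,u_{k-1}$ after deleting one or the other of its two spine ends. This is where I would exploit that all of $u_2,\dots,u_{k-1}$ have degree at most $\deg u_1$, together with Lemma~\ref{minimal1} applied inside that branch. I would dispatch the cases $k=2$ and (by symmetry) $k=n-1$ first, since there the bracket reduces directly to a strict inequality $M(\text{proper induced subcaterpillar},x)<M(\text{caterpillar},x)$; after that the remaining analysis may assume $\deg u_2\le\deg u_1$ and $\deg u_{n-1}\le\deg u_n$.

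I expect this last step to be the main obstacle. The interchange is genuinely delicate: small examples show that interchanging $u_1$ with the \emph{wrong} interior vertex can make the bracket negative, so one really has to use that $u_k$ is the \emph{first} vertex beating $u_1$, and, when that still does not settle it, fall back on interchanging with $u_n$ from the other side (the favourable direction can be either one) or on a short sequence of adjacent exchanges; the recurrences of Lemma~\ref{lemma1p} then have to be pushed through by hand rather than replaced by a one‑line monotonicity.
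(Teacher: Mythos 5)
Your algebraic reduction of the leaf interchange to the positivity of the bracket $x\,M(H-u_1-u_k,x)+M(H-u_k,x)-M(H-u_1,x)$ is correct, but the proposal stops exactly where the work is: you never prove that this bracket is positive for a general interior vertex $u_k$, and you yourself concede that it can fail for the ``wrong'' interior vertex and that one may have to switch to $u_n$ or perform a sequence of adjacent exchanges --- none of which is carried out, so the contradiction with minimality is never actually reached (only the easy cases $k=2$, $k=n-1$ are settled). Moreover, the one concrete tool you invoke for the remaining comparison of $M(F-u_2,x)$ with $M(F-u_{k-1},x)$, namely ``Lemma \ref{minimal1} applied inside that branch'', is not available: Lemma \ref{minimal1} presupposes that the \emph{whole} caterpillar $G$ is minimal in $\mathbb{C}_D$ and is decomposed at two vertices $v,w$ each carrying a complete branch of $G$; it asserts nothing about sub-caterpillars such as the component of $F$ on $u_2,\dots,u_{k-1}$. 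So there is a genuine gap at the central step.

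The gap is avoidable, and this is how the paper argues: do not transfer leaves and recompute matching polynomials, but apply Lemma \ref{minimal1} to $G$ itself, taking $B$ to be a single pendant leaf at $v=u_1$ and $B'$ the complete branch rooted at $u_i$ ($3\le i\le n$) containing neither $u_1$ nor $u_2$, so that $w=u_{i-1}$. Since $B'$ is neither empty nor a leaf, Lemma \ref{lemma3p} and Remark \ref{remark4p} give $\tau(B,x)=1>\tau(B',x)$, and Lemma \ref{minimal1} i) (with the two sides interchanged) immediately forces $d\ge d'$, i.e. $\deg(u_1)\ge\deg(u_{i-1})$ for every $3\le i\le n$; the symmetric argument handles $u_n$, and $n\le 2$ is trivial. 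In effect, the bracket inequality you need is a special case of what Lemma \ref{minimal1} already encodes through Lemma \ref{minimal0} and the identity \eqref{M(G,x)}; if you insist on the direct computation, closing it would amount to re-proving that lemma for this configuration.
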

\begin{proof}
If $n=1$, then $G$ is a star and has only one non-leaf vertex, which is of largest degree. If $n=2$, then $G$ has two non-leaf vertices and they have the largest degrees in $G$. If $n\geq 3$, decompose $G$ as in Figure \ref{Figure-1} with $B$ a leaf adjacent to $u_1$ and $B^{'}$ a complete branch of $G$ such that $B^{'}$ does not contain $u_1$ and $u_2$ and the root of $B^{'}$ is $u_i$, for some $i$ with $3\leq i\leq n$. Then, $B^{'}$ is not empty and not a leaf. By Lemma \ref{lemma3p} and Remark \ref{remark4p}, we have $\tau(B,x)=1>\tau(B^{'},x)$. By Lemma \ref{minimal1}, we get that $\deg(u_1)\geq\deg(u_{i-1})$. Hence $\deg(u_1)\geq\max\{\deg(u_i):2\leq i\leq n-1\}$. Same reasoning leads to $\deg(u_n)\geq\max\{\deg(u_i):2\leq i\leq n-1\}$.
\end{proof}

\begin{defn}
Let $D=(d_1,d_2,\dots ,d_n)$ be a reduced degree sequence of a caterpillar for some $n\geq 2$. Let $k=\left\lceil\frac{n}{2}\right\rceil$. Then, $C_{L}^{k}(D)$ and $C_{R}^{k}(D)$ are defined as two disjoint complete branches as follows:
\begin{itemize}
\item[i)] If $n=2$, then $C_{L}^{k}(d_1,d_2)=C_{L}^{1}(d_1,d_2)=C(d_1-1)$ and $C_{R}^{k}(d_1,d_2)=C_{R}^{1}(d_1,d_2)=C(d_2-1)$.
\item[ii)] If $n=3$, then $C_{L}^{k}(d_1,d_2,d_3)=C_{L}^{2}(d_1,d_2,d_3)=C(d_1,d_3-1)$ and $C_{R}^{k}(d_1,d_2,d_3)=C_{R}^{2}(d_1,d_2,d_3)=C(d_2-1)$.
\item[iii)] If $n=4$, then $C_{L}^{k}(d_1,d_2,d_3,d_4)=C_{L}^{2}(d_1,d_2,d_3,d_4)=C(d_1,d_4-1)$ and $C_{R}^{k}(d_1,d_2,d_3,d_4)=C_{R}^{2}(d_1,d_2,d_3,d_4)=C(d_2,d_3-1)$.
\item[iv)] Suppose that $n\geq 5$. Let $n=4\ell+i$ for some integers $\ell$ and $i\in\{0,1,2,3\}$. In this case $k=2l+\lceil i/2 \rceil$. Then 
\begin{align*}
C_{L}^{k}(d_1,d_2,\dots ,d_n)
&=\begin{cases}C(d_1,d_n,d_3,d_{n-2},\dots ,d_{k+2},d_{k}-1),&\text{if $i=1$}\\
C(d_1,d_n,d_3,d_{n-2},\dots ,d_{k-1},d_{k+1}-1),&\text{if $i=3$}\\
C(d_1,d_n,d_3,d_{n-2},\dots ,d_{k+3},d_{k}-1),&\text{if $i=2$}\\
C(d_1,d_n,d_3,d_{n-2}\dots ,d_{k -1},d_{k+2}-1),&\text{if $i=0$}
\end{cases}
\end{align*}
and 
\begin{align*}
C_{R}^{k}(d_1,d_2,\dots ,d_n)
&=\begin{cases}C(d_2,d_{n-1},d_4,d_{n-3},\dots ,d_{k-1},d_{k+1}-1),&\text{if $i=1$}\\
C(d_2,d_{n-1},d_4,d_{n-3},\dots ,d_{k+2},d_{k}-1),&\text{if $i=3$}\\
C(d_2,d_{n-1},d_4,d_{n-3},\dots ,d_{k+2},d_{k+1}-1),&\text{if $i=2$}\\
C(d_2,d_{n-1},d_4,d_{n-3},\dots ,d_{k},d_{k+1}-1),&\text{if $i=0$.}\end{cases}
\end{align*}
\end{itemize}
\end{defn}

\begin{defn}
Let $D=(d_1,d_2,\dots ,d_n)$ be a reduced degree sequence of a caterpillar $G$. Then, we define $\mathcal{S}(D)$ to be the caterpillar of reduced degree sequence $D$, obtained by joining by an edge the root of $C_{L}^{k}(D)$ and the root of $C_{R}^{k}(D)$, where $k=\left\lceil\frac{n}{2}\right\rceil$. The roots of $C_{L}^{k}(D)$ and $C_{R}^{k}(D)$ are identified as follows. 
If $n=2$, then the root of $C_{L}^{k}(D)$ is the pseudo-leaf or a leaf of degree $d_1-1$, and the root of $C_{R}^{k}(D)$ is the pseudo-leaf or a leaf of degree $d_2-1$. If $n=3$, then the root of $C_{L}^{k}(D)$ is the pseudo-leaf or a leaf of degree $d_3-1$, and the root of $C_{R}^{k}(D)$ is the pseudo-leaf or a leaf of degree $d_2-1$. If $n=4$, then the root of $C_{L}^{k}(D)$ is the pseudo-leaf or a leaf of degree $d_4-1$, and the root of $C_{R}^{k}(D)$ is the pseudo-leaf or a leaf of degree $d_3-1$.

Note that for all integers $n\geq 5$, $C_{R}^{k}(d_3,\dots ,d_{n-2})$ is a subgraph of $C_{R}^{k}(d_1,d_2,\dots ,d_{n})$, and $C_{L}^{k}(d_3,\dots ,d_{n-2})$ is a subgraph of $C_{L}^{k}(d_1,d_2,\dots ,d_{n})$. To be precise,  $C_{R}^{k}(d_1,d_2,\dots ,d_{n})$ is obtained from $C_{R}^{k}(d_3,\dots ,d_{n-2})$  by attaching $d_{n-1}-1$ leaves to the leaf of $C_{R}^{k}(d_3,\dots ,d_{n-2})$ furthest from the root, and then attach $d_2-1$ more leaves to one of the newly attached leaves. After such a construction, we keep the root of $C_{R}^{k}(d_3,\dots ,d_{n-2})$ to be the root of $C_{R}^{k}(d_1,\dots ,d_{n})$. 
$C_{L}^{k}(d_1,\dots ,d_{n})$ is also an extension of $C_{L}^{k}(d_3,\dots ,d_{n-2})$ by iteratively attaching more leaves to a leaf further from the root. The root of  $C_{L}^{k}(d_3,\dots ,d_{n-2})$ is kept to the root of $C_{L}^{k}(d_1,\dots ,d_{n})$.

In this way, if $n=4\ell+i\geq 5$ for some integer $\ell$ and $0\leq i \leq 3$ we have
\begin{align*}
&\mathcal{S}(D)=\\
&\begin{cases}C(d_1,d_n,d_3,d_{n-2},\dots ,d_{\left\lceil\frac{n}{2}\right\rceil +2},d_{\left\lceil\frac{n}{2}\right\rceil},d_{\left\lceil\frac{n}{2}\right\rceil +1},d_{\left\lceil\frac{n}{2}\right\rceil -1},\dots ,d_{n-3},d_4,d_{n-1},d_2), \text{if  $i=1$}\\C(d_1,d_n,d_3,d_{n-2},\dots ,d_{\left\lceil\frac{n}{2}\right\rceil -1},d_{\left\lceil\frac{n}{2}\right\rceil +1},d_{\left\lceil\frac{n}{2}\right\rceil },d_{\left\lceil\frac{n}{2}\right\rceil +2},\dots ,d_{n-3},d_4,d_{n-1},d_2), \text{if $i=3$}\\C(d_1,d_n,d_3,d_{n-2},\dots ,d_{\left\lceil\frac{n}{2}\right\rceil +3},d_{\left\lceil\frac{n}{2}\right\rceil },d_{\left\lceil\frac{n}{2}\right\rceil +1},d_{\left\lceil\frac{n}{2}\right\rceil +2},\dots ,d_{n-3},d_4,d_{n-1},d_2), \text{if $i=2$}\\C(d_1,d_n,d_3,d_{n-2},\dots ,d_{\left\lceil\frac{n}{2}\right\rceil -1},d_{\left\lceil\frac{n}{2}\right\rceil +2},d_{\left\lceil\frac{n}{2}\right\rceil +1},d_{\left\lceil\frac{n}{2}\right\rceil},\dots ,d_{n-3},d_4,d_{n-1},d_2), \text{if $i=0$.}\end{cases}
\end{align*}
See Figure \ref{ILLUSTRATIONC1LK}, for a reduced degree sequence $\mathcal{S}(5,5,5,4,4,4,4,3,3,3)$.
\begin{figure}[htbp]
\centering
\includegraphics[scale=0.5]{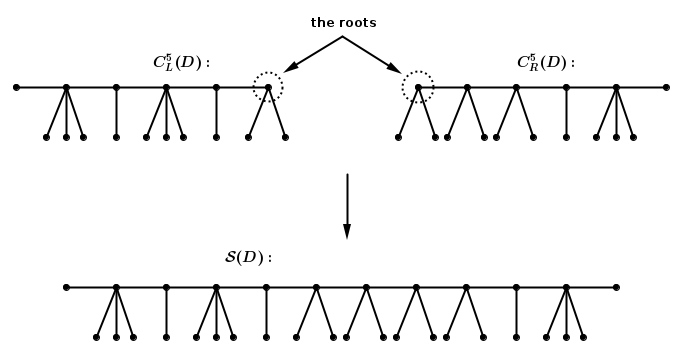}
\caption{The graphs $C_{L}^{5}(D)$, $C_{R}^{5}(D)$ and $\mathcal{S}(D)$, for $D=(5,5,5,4,4,4,4,3,3,3)$.\\}
\label{ILLUSTRATIONC1LK}
\end{figure}
\end{defn}

\begin{defn}
Let $G$ be a caterpillar and label all its non-leaf vertices from left to right as $u_1$, $u_2$,$\dots$,$u_n$. We define $G_L^{u_i}$ to be a complete branch of $G$ that contains $u_i$ but not $u_{i+1}$ and $G_R^{u_i}=G-G_L^{u_{i-1}}$ for $i\in \{1,\dots ,n-1\}$.
\end{defn}

\begin{thm}\label{minimal theorem}
Let $\mathbb{C}_D$ be the set of all caterpillars with reduced degree sequence $D$. Then, $M(\mathcal{S}(D),x)\leq M(H,x)$ for all $H\in\mathbb{C}_D$.
\end{thm}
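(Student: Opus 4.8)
The plan is to prove, for each fixed $x>0$, that any $G\in\mathbb{C}_D$ minimizing $M(\cdot,x)$ satisfies $M(G,x)=M(\mathcal{S}(D),x)$. Since $\mathcal{S}(D)$ does not depend on $x$, this yields $M(\mathcal{S}(D),x)=\min_{H\in\mathbb{C}_D}M(H,x)\le M(H,x)$ for every $H$ and every $x$, which is the assertion. I would argue by strong induction on $n=|D|$. For $n\le 3$, Lemma~\ref{minimal2} alone determines the minimizer up to isomorphism — it forces the two end non-leaf vertices to carry the two largest entries of $D$, and for $n\le 3$ this leaves a single caterpillar, which is $\mathcal{S}(D)$ by definition. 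For $n=4$ two candidates remain, separated by one application of Lemma~\ref{minimal1} with $B=[d_1]=G_L^{u_1}$ at $v=u_2$ and $B'=[d_2]=G_R^{u_4}$ at $w=u_3$, using $\tau([d_1],x)\le\tau([d_2],x)$ from Lemma~\ref{lemma3p} and Remark~\ref{remark4p}; when $d_1=d_2$ the two candidates are isomorphic via reversal of the spine.

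For the inductive step, fix a minimizer $G$ with non-leaf vertices $u_1,\dots,u_n$ from left to right; by Lemma~\ref{minimal2}, after reversing the spine if necessary, $\deg(u_1)=d_1$ and $\deg(u_n)=d_2$. I would next show $\deg(u_2)=d_n$ and $\deg(u_{n-1})=d_{n-1}$ — the two smallest entries of $D$ — so that the four outermost degrees of $G$ agree with those of $\mathcal{S}(D)$, whose spine read left to right is $(d_1),(d_n),[\text{spine of }\mathcal{S}(d_3,\dots,d_{n-2})],(d_{n-1}),(d_2)$. To force $\deg(u_2)=d_n$, apply Lemma~\ref{minimal1}(i) inside $G$ to $B=[d_1]=G_L^{u_1}$ at $v=u_2$ and $B'=G_R^{u_{j+1}}$ at $w=u_j$, for $j\in\{3,\dots,n-1\}$: since $\rd(B')=\deg(u_{j+1})-1\le d_1-1$ and $B'$ is not a pseudo-leaf branch of that root degree (the only exception being $j=n-1$ together with $d_1=d_2$), Lemma~\ref{lemma3p} and Remark~\ref{remark4p} force $\tau(B',x)>\tau(B,x)$, whence $\deg(u_j)\ge\deg(u_2)$; together with $\deg(u_1),\deg(u_n)\ge\deg(u_2)$ this gives $\deg(u_2)=d_n$. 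A parallel argument at the other end, with suitably chosen reference branches, gives $\deg(u_{n-1})=d_{n-1}$. Equalities among the $d_i$, in particular $d_1=d_2$, turn these strict inequalities into equalities of $\tau$ at equal heights, and Lemma~\ref{lemma4m} then lets one permute the resulting isomorphic branches so that $G$ matches $\mathcal{S}(D)$ exactly.

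With the four outermost degrees pinned, I would decompose $M(G,x)$ via \eqref{M(G,x)} with $v=u_3$, $w=u_{n-2}$, $B=G_L^{u_2}$, $B'=G_R^{u_{n-1}}$. Now $M(B,x)$, $M(B',x)$, $\tau(B,x)$, $\tau(B',x)$ are fixed, and the only remaining freedom — the arrangement of $d_3,\dots,d_{n-2}$ on $u_3,\dots,u_{n-2}$ — enters through $M(H,x)$, $M(H-v,x)$, $M(H-w,x)$ and $M(H-\{v,w\},x)$, where $H$ is the segment of $G$ lying between $u_3$ and $u_{n-2}$ with the pendant leaves of those two vertices removed; this segment is governed by the sub-sequence $(d_3,\dots,d_{n-2})$. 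The induction hypothesis identifies $\mathcal{S}(d_3,\dots,d_{n-2})$ as optimal for $\mathbb{C}_{(d_3,\dots,d_{n-2})}$; substituting the corresponding inner block back into \eqref{M(G,x)} and invoking the recursive description of $\mathcal{S}(D)$ then gives $M(G,x)=M(\mathcal{S}(D),x)$, completing the induction.

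The step I expect to be the main obstacle is this passage to the inner block: the bracketed factor in \eqref{M(G,x)} is not $M(H,x)$ alone, but also carries $dd'x^2M(H-\{v,w\},x)$, $x^2\tau(B,x)\tau(B',x)M(H-\{v,w\},x)$ and the three terms of $M_v^w(G,x)$, so "$\mathcal{S}(d_3,\dots,d_{n-2})$ is optimal" is not literally the statement being inducted on. I anticipate one must strengthen the inductive claim so that $\mathcal{S}(D)$, with a suitable rooting, \emph{simultaneously} minimizes $M$ of the caterpillar, $M$ with one spine-end vertex deleted, and $M$ with both deleted (equivalently, minimizes $M$ of the associated complete branch while maximizing its $\tau$); then, the fixed coefficients $d,d',\tau(B,x),\tau(B',x)$ being nonnegative, the whole bracket is minimized term by term, by a rearrangement argument in the spirit of Lemma~\ref{minimal0}. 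A secondary, more bookkeeping difficulty is that every invocation of Lemma~\ref{minimal1} must be paired with a legitimate Figure~\ref{Figure-1}-type decomposition of $G$ and a verification that the reference branch's $\tau$ lies on the side required by the relevant clause of that lemma — which is precisely what dictates the outside-in order of pinning and makes the extremality of pseudo-leaf branches and pendant leaves for $\tau$ (Lemma~\ref{lemma3p}, Remark~\ref{remark4p}) the engine of the whole argument.
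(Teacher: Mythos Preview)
Your opening moves --- the small cases and the pinning of the four outermost degrees via Lemmas~\ref{minimal2}, \ref{minimal1}, \ref{lemma3p} and Remark~\ref{remark4p} --- are correct and coincide with the paper's ``basis case''. But the gap you flag in the inductive step is genuine and is not repaired by your proposed strengthening. After fixing $\deg(u_1),\deg(u_2),\deg(u_{n-1}),\deg(u_n)$, the bracket in \eqref{M(G,x)} with $v=u_3$, $w=u_{n-2}$, $B=G_L^{u_2}$, $B'=G_R^{u_{n-1}}$ collapses to
\[
M(H,x)+x\alpha\,M(H-v,x)+x\beta\,M(H-w,x)+x^{2}\alpha\beta\,M(H-\{v,w\},x),
\]
with $\alpha=d+\tau(B,x)$, $\beta=d'+\tau(B',x)$; and here $d=\deg(u_3)-2$, $d'=\deg(u_{n-2})-2$ are themselves unknowns ranging over $\{d_i-2:3\le i\le n-2\}$. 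Even if $\mathcal{S}$ of the inner sequence simultaneously minimised all four $M$-quantities, that would say nothing about which two of $d_3,\dots,d_{n-2}$ should occupy $u_3,u_{n-2}$, since the cross-terms couple the choice of $(d,d')$ to the structure of $H$. And $H$ is not an element of $\mathbb{C}_{(d_3,\dots,d_{n-2})}$ in the first place --- its spine-ends $v,w$ have been stripped to degree~$1$ --- so the hypothesis you wish to invoke is not the statement of the theorem for a shorter sequence.

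The paper does not attempt this reduction. Rather than inducting on $n$, it inducts on the layer index $j$: having placed the correct degrees at positions $j$ and $n{-}j{+}1$ and established the alternating comparison between $\tau(H_L^{u_{j-1}},x)$, $\tau(H_R^{u_{n-j+2}},x)$ and every interior $\tau(H_L^{u_{i-2}},x)$, it feeds that comparison into Lemma~\ref{minimal1} to force the degrees at positions $j{+}1$ and $n{-}j$, and then Lemma~\ref{lemma2p} propagates the $\tau$-inequality one step inward. This runs all the way to the centre and pins every degree, without ever appealing to a smaller instance of the theorem. In effect, the paper takes what you call the ``secondary, bookkeeping'' difficulty --- pairing each application of Lemma~\ref{minimal1} with a legitimate decomposition and a verified $\tau$-comparison --- and makes it the entire argument, iterated roughly $n/2$ times rather than twice.
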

\begin{proof}
Let $H$ be a caterpillar with reduced degree sequence $D=(d_1,d_2,\dots ,d_n)$. Suppose $H$ is minimal with respect to $M(.,x)$. Label all the non-leaf vertices in $H$ from left to right as $u_1,u_2,\dots ,u_n$.

i) If $n=1$, then $H$ is a star and $H=C(d_1)=C_{L}^{1}(d_1)$. Hence $H=\mathcal{S}(D)$, with $C_{R}^{1}(d_1)=()$.

ii) If $n=2$, then $H=C(d_1,d_2)$. $C(d_1,d_2)$ can be viewed as a caterpillar obtained by joining the roots of $C(d_1-1)=C_{L}^{1}(d_1,d_2)$ and $C(d_2-1)=C_{R}^{1}(d_1,d_2)$. Hence $H=C(d_1,d_2)=\mathcal{S}(D)$.

iii) If $n=3$, then by Lemma \ref{minimal2}, $u_1$ and $u_3$ attains the largest degrees in $H$. Assume $\deg(u_1)\geq\deg(u_3)$. Then, $H$ is the caterpillar of Figure \ref{FX1} (a), which can be viewed as in Figure \ref{FX1} (b), with $C_{L}^{2}(d_1,d_2,d_3)=C(d_1,d_3-1)$ and $C_{R}^{2}(d_1,d_2,d_3)=C(d_2-1)$.
\begin{figure}[htbp]
\centering
\includegraphics[scale=0.5]{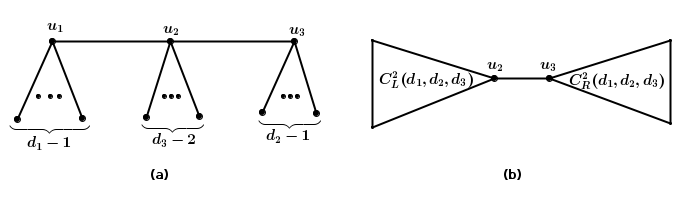}
\caption{The caterpillar $H$ in the proof of Theorem \ref{minimal theorem} for $n=3$.\\}
\label{FX1}
\end{figure}
Hence, $H\cong\mathcal{S}(D)$.

iv) If $n=4$, then by Lemma \ref{minimal2}, $u_1$ and $u_4$ must have the largest degrees in $H$. Assume $\deg(u_1)\geq\deg(u_4)$. Then, $$\tau(H_{L}^{u_1},x)=\frac{1}{1+x\left(\deg(u_1)-1\right)}\\\leq\frac{1}{1+x\left(\deg(u_4)-1\right)}=\tau(H_{R}^{u_4},x),$$ with equality if and only if $\deg(u_1)=\deg(u_4)$. If $\tau(H_{L}^{u_1},x)=\tau(H_{R}^{u_4},x)$, then $\deg(u_1)=\deg(u_4)$ and we can choose $\deg(u_2)\leq\deg(u_3)$. Otherwise $\tau(H_{L}^{u_1},x)<\tau(H_{R}^{u_4},x)$ and by Lemma \ref{minimal1}, we get that $\deg(u_2)\leq\deg(u_3)$. Hence, in all cases, $\deg(u_2)\leq\deg(u_3)$. Then, $H=C(d_1,d_4,d_3,d_2)$ is as in Figure \ref{FX2} (a), which can be viewed as in Figure \ref{FX2} (b).
\begin{figure}[htbp]
\centering
\includegraphics[scale=0.5]{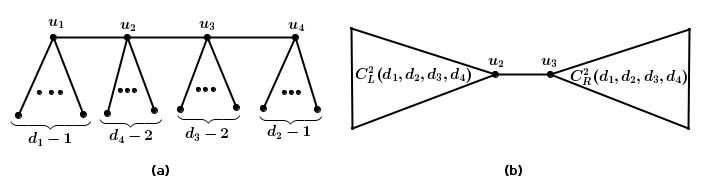}
\caption{The caterpillar $H$ in the proof of Theorem \ref{minimal theorem} for $n=4$.\\}
\label{FX2}
\end{figure} 
Hence $H\cong\mathcal{S}(D)$.

v) Suppose that $n\geq 5$. We are going to prove that $H$ satisfies the following characterisation of $\mathcal{S}(D)$. For $1\leq j< n$, if $j$ is odd, then $$\tau(H_{L}^{u_{j-1}},x)\geq\tau(H_{R}^{u_{n-j+2}},x)>\tau(H_{L}^{u_{i-2}},x)\text{, for all }j+2\leq i\leq n-j-1$$ and $$\deg(u_j)\geq\deg(u_{n-j+1})\geq\max\left\{\deg(u_{j+1}),\dots ,\deg(u_{n-j})\right\},$$ if $j$ is even then $$\tau(H_{L}^{u_{j-1}},x)\leq\tau(H_{R}^{u_{n-j+2}},x)<\tau(H_{L}^{u_{i-2}},x)\text{, for all }j+2\leq i\leq n-j-1$$and $$\deg(u_j)\leq\deg(u_{n-j+1})\leq\min\left\{\deg(u_{j+1}),\dots ,\deg(u_{n-j})\right\}.$$

Basis case: By Lemma \ref{minimal2}, $u_1$ and $u_n$ must have the largest degrees in $H$. Assume $\deg(u_1)\geq\deg(u_n)$. Therefore \begin{equation}\deg(u_1)\geq\deg(u_n)\geq\max\left\{\deg(u_2),\dots ,\deg(u_{n-1})\right\}.\label{onef}\end{equation}Then, \begin{equation}\tau(H_{L}^{u_1},x)=\frac{1}{1+x\left(\deg(u_1)-1\right)}\leq\frac{1}{1+x\left(\deg(u_n)-1\right)}=\tau(H_{R}^{u_n},x),\label{twof}\end{equation}with equality if and only if $\deg(u_1)=\deg(u_n)$. If $\tau(H_{L}^{u_1},x)=\tau(H_{R}^{u_n},x)$, we then choose $\deg(u_2)\leq\deg(u_{n-1})$. Otherwise, $\tau(H_{L}^{u_1},x)<\tau(H_{R}^{u_n},x)$ and by Lemma \ref{minimal1}, we get that $\deg(u_2)\leq\deg(u_{n-1})$. Hence, in all cases, \begin{equation}\deg(u_2)\leq\deg(u_{n-1}).\label{threef}\end{equation}From \eqref{onef}, we have $\deg(u_n)\geq\max\left\{\deg(u_i):2\leq i\leq n-1\right\}$. Since $n\geq 5$, then for $2\leq i\leq n-1$ the branch $H_{L}^{u_{i-1}}$ is neither empty nor a leaf. By Lemma \ref{lemma3p} and Remark \ref{remark4p}, we have $0<\tau(H_{L}^{u_{i-1}},x)<1$. Then, for $2\leq i\leq n-1$
\begin{align*}
\tau(H_{R}^{u_n},x)
&=\frac{1}{1+x\left(\deg(u_n)-1\right)}\\
&\leq\frac{1}{1+x\left(\deg(u_i)-1\right)},\hspace{2.2cm}\text{ since }\deg(u_n)\geq\deg(u_i)\\
&<\frac{1}{1+x\left(\deg(u_i)-2+\tau(H_{L}^{u_{i-1}},x)\right)},\text{ since }0<\tau(H_{L}^{u_{i-1}},x)<1\\
&=\tau(H_{L}^{u_i},x).
\end{align*}
Hence \begin{equation}\tau(H_{R}^{u_n},x)<\tau(H_{L}^{u_i},x)\text{, for all }2\leq i\leq n-1.\label{three.1}\end{equation}By Lemma \ref{minimal1}, \begin{equation}\deg(u_{n-1})\leq\deg(u_{i+1})\text{, for all }2\leq i\leq n-3.\label{fourf}\end{equation}From \eqref{threef} and \eqref{fourf} we get that $\deg(u_2)\leq\deg(u_{n-1})\leq\deg(u_{i+1})$, for all $2\leq i\leq n-3$. Therefore, \begin{equation}\deg(u_2)\leq\deg(u_{n-1})\leq\min\left\{\deg(u_3),\dots ,\deg(u_{n-2})\right\}.\label{fivef}\end{equation}From \eqref{twof} we have $\tau(H_{L}^{u_1},x)\leq\tau(H_{R}^{u_n},x)$, with equality if and only if $\deg(u_1)=\deg(u_n)$. From \eqref{fivef} we get that $\deg(u_2)\leq\deg(u_{n-1})$. Since $\deg(u_2)\leq\deg(u_{n-1})$ and $\tau(H_{L}^{u_1},x)\leq\tau(H_{R}^{u_n},x)$, then
\begin{align*}
\tau(H_{L}^{u_2},x)
&=\frac{1}{1+x\left(\deg(u_2)-2+\tau(H_{L}^{u_1},x)\right)}\\
&\geq\frac{1}{1+x\left(\deg(u_{n-1})-2+\tau(H_{R}^{u_n},x)\right)}
=\tau(H_{R}^{u_{n-1}},x).
\end{align*}
Therefore, \begin{equation}\tau(H_{L}^{u_2},x)\geq\tau(H_{R}^{u_{n-1}},x).\label{five.1}\end{equation}By Lemma \ref{lemma4m}, we get $\tau(H_{L}^{u_2},x)=\tau(H_{R}^{u_{n-1}},x)$ if and only if $H_{L}^{u_2}\approx_r H_{R}^{u_{n-1}}$, that is, if $\deg(u_2)=\deg(u_{n-1})$ and $\deg(u_1)=\deg(u_n)$. If $\tau(H_{L}^{u_2},x)=\tau(H_{R}^{u_{n-1}},x)$, then choose $\deg(u_3)\geq\deg(u_{n-2})$. Otherwise $\tau(H_{L}^{u_2},x)>\tau(H_{R}^{u_{n-1}},x)$ and by Lemma \ref{minimal1}, we get $\deg(u_3)\geq\deg(u_{n-2})$. Hence \begin{equation}\deg(u_3)\geq\deg(u_{n-2}).\label{sixf}\end{equation}For any $3\leq i\leq n-2$, $H_{L}^{u_{i-2}}$ is neither empty nor a leaf. Then, by Lemma \ref{lemma3p} and Remark \ref{remark4p}, we have $0<\tau(H_{L}^{u_{i-2}},x)<1$. And for any $3\leq i\leq n-2$, we get
\begin{align*}
\tau(H_{R}^{u_n},x)
&=\frac{1}{1+x\left(\deg(u_n)-1\right)}\\
&\leq\frac{1}{1+x\left(\deg(u_{i-1})-1\right)}\text{, since }\deg(u_n)\geq\deg(u_{i-1})\text{ in }\eqref{onef}\\
&<\frac{1}{1+x\left(\deg(u_{i-1})-2+\tau(H_{L}^{u_{i-2}},x)\right)}\text{, since }0<\tau(H_{L}^{u_{i-2}},x)<1\\
&=\tau(H_{L}^{u_{i-1}},x).
\end{align*}
Therefore, \begin{equation}\tau(H_{R}^{u_{n}},x)<\tau(H_{L}^{u_{i-1}},x)\text{, for any }3\leq i\leq n-2.\label{sevenf}\end{equation}From \eqref{fivef} and \eqref{sevenf} we get 
\begin{align*}
\tau(H_{R}^{u_{n-1}},x)
&=\frac{1}{1+x\left(\deg(u_{n-1})-2+\tau(H_{R}^{u_n},x)\right)}
>\frac{1}{1+x\left(\deg(u_i)-2+\tau(H_{L}^{u_{i-1}},x)\right)}\\
&=\tau(H_{L}^{u_i},x).
\end{align*} 
Therefore, \begin{equation}\tau(H_{R}^{u_{n-1}},x)>\tau(H_{L}^{u_i},x)\text{, for any }3\leq i\leq n-2.\label{seven.1}\end{equation}By Lemma \ref{minimal1}, we get that $\deg(u_{n-2})\geq\deg(u_{i+1})$ for any $3\leq i\leq n-4$. Hence \begin{equation}\deg(u_{n-2})\geq\max\left\{\deg(u_4),\dots ,\deg(u_{n-3})\right\}.\label{eightf}\end{equation}From \eqref{sixf} and \eqref{eightf} we get that \begin{equation}\deg(u_3)\geq\deg(u_{n-2})\geq\max\left\{\deg(u_4),\dots ,\deg(u_{n-3})\right\}.\label{ninef}\end{equation}Therefore, from \eqref{onef} we have $$\deg(u_1)\geq\deg(u_n)\geq\max\left\{\deg(u_2),\dots ,\deg(u_{n-1})\right\},$$from \eqref{twof} and \eqref{three.1} we have $$\tau(H_{L}^{u_1},x)\leq\tau(H_{R}^{u_n},x)<\tau(H_{L}^{u_{i-2}},x)\text{, for any }4\leq i\leq n-3$$and from \eqref{fivef} we have $$\deg(u_2)\leq\deg(u_{n-1})\leq\min\left\{\deg(u_3),\dots ,\deg(u_{n-2})\right\}.$$From \eqref{five.1} and \eqref{seven.1} we have $$\tau(H_{L}^{u_2},x)\geq\tau(H_{R}^{u_{n-1}},x)>\tau(H_{L}^{u_{i-2}},x)\text{, for any }5\leq i\leq n-4$$ and from \eqref{ninef} we have $$\deg(u_3)\geq\deg(u_{n-2})\geq\max\left\{\deg(u_4),\dots ,\deg(u_{n-3})\right\}.$$Suppose that for all integers $1\leq j<n$:\\(a) If $j$ is odd, then \begin{equation}
\tau(H_{L}^{u_{j-1}},x)\geq\tau(H_{R}^{u_{n-j+2}},x)>\tau(H_{L}^{u_{i-2}},x)\text{, for any }j+2\leq i\leq n-j-1,\label{tenf}
\end{equation}and \begin{equation}
\deg(u_j)\geq\deg(u_{n-j+1})\geq\max\left\{\deg(u_{j+1}),\dots ,\deg(u_{n-j})\right\}.\label{elevenf}
\end{equation}(b) If $j$ is even, then \begin{equation}
\tau(H_{L}^{u_{j-1}},x)\leq\tau(H_{R}^{u_{n-j+2}},x)<\tau(H_{L}^{u_{i-2}},x)\text{, for any }j+2\leq i\leq n-j-1\label{twelvef}
\end{equation}and \begin{equation}
\deg(u_j)\leq\deg(u_{n-j+1})\leq\min\left\{\deg(u_{j+1}),\dots ,\deg(u_{n-j})\right\}.\label{thirteenf}
\end{equation}(i) Suppose $j$ is odd. Then, from \eqref{tenf} and \eqref{elevenf} we have $\tau(H_{L}^{u_{j-1}},x)\geq\tau(H_{R}^{u_{n-j+2}},x)$ and $\deg(u_j)\geq\deg(u_{n-j+1})$. Hence,
\begin{align*}
\tau(H_{L}^{u_j},x)
&=\frac{1}{1+x\left(\deg(u_j)-2+\tau(H_{L}^{u_{j-1}},x)\right)}\\
&\leq\frac{1}{1+x\left(\deg(u_{n-j+1})-2+\tau(H_{R}^{u_{n-j+2}},x)\right)}
=\tau(H_{R}^{u_{n-j+1}},x).
\end{align*}
Therefore, \begin{equation}
\tau(H_{L}^{u_j},x)\leq \tau(H_{R}^{u_{n-j+1}},x).\label{fourteenf}
\end{equation}
By Lemma \ref{lemma4m}, $\tau(H_{L}^{u_j},x)=\tau(H_{R}^{u_{n-j+1}},x)$ if and only if $H_{L}^{u_{j}}\approx_r H_{R}^{u_{n-j+1}}$, in such a case $\deg(u_j)=\deg(u_{n-j+1})$ and $\tau(H_{L}^{u_{j-1}},x)=\tau(H_{R}^{u_{n-j+2}},x).$ Otherwise, $\tau(H_{L}^{u_j},x)<\tau(H_{R}^{u_{n-j+1}},x)$. If $\tau(H_{L}^{u_j},x)=\tau(H_{R}^{u_{n-j+1}},x)$, then we choose $\deg(u_{j+1})\leq\deg(u_{n-j})$. Otherwise $\tau(H_{L}^{u_j},x)<\tau(H_{R}^{u_{n-j+1}},x)$ and by Lemma \ref{minimal1}, we must have $\deg(u_{j+1})\leq\deg(u_{n-j})$. Hence, we laways have \begin{equation}
\deg(u_{j+1})\leq\deg(u_{n-j}).\label{fifteenf}
\end{equation}From \eqref{tenf}, we have $\tau(H_{R}^{u_{n-j+2}},x)>\tau(H_{L}^{u_{i-2}},x)$, for any $j+2\leq i\leq n-j-1$. Then, by Lemma \ref{minimal1}, we must have $\deg(u_{n-j+1})\geq\deg(u_{i-1})$ and hence,
\begin{align*}
\tau(H_{R}^{u_{n-j+1}},x)
&=\frac{1}{1+x\left(\deg(u_{n-j+1})-2+\tau(H_{R}^{u_{n-j+2}},x)\right)}\\
&<\frac{1}{1+x\left(\deg(u_{i-1})-2+\tau(H_{L}^{u_{i-2}},x)\right)}
=\tau(H_{L}^{u_{i-1}},x).
\end{align*}
Therefore, \begin{equation}
\tau(H_{R}^{u_{n-j+1}},x)<\tau(H_{L}^{u_{i-1}},x)\text{, for any }j+2\leq i\leq n-j-1.\label{sixteenf}
\end{equation}Hence, from \eqref{fourteenf} and \eqref{sixteenf} we get  \begin{equation}
\tau(H_{L}^{u_j},x)\leq\tau(H_{R}^{u_{n-j+1}},x)<\tau(H_{L}^{u_{i-1}},x)\text{, for any }j+2\leq i\leq n-j-1.\label{seventeenf}
\end{equation}From \eqref{seventeenf} we have $\tau(H_{R}^{u_{n-j+1}},x)<\tau(H_{L}^{u_{i-1}},x)$. By Lemma \ref{minimal1}, we must have $\deg(u_{n-j})\leq\deg(u_i)$. Hence \begin{equation}
\deg(u_{n-j})\leq\min\left\{\deg(u_{j+2}),\dots ,\deg(u_{n-j-1})\right\}.\label{eighteenf}
\end{equation}Therefore, from \eqref{fifteenf} and \eqref{eighteenf} we must have \begin{equation}
\deg(u_{j+1})\leq\deg(u_{n-j})\leq\min\left\{\deg(u_{j+2}),\dots ,\deg(u_{n-j-1})\right\}.\label{ninteenf}
\end{equation}Hence, from \eqref{seventeenf} and \eqref{ninteenf} if $j$ is odd, then $$\tau(H_{L}^{u_j},x)\leq\tau(H_{R}^{u_{n-j+1}},x)<\tau(H_{L}^{u_{i-1}},x)\text{, for any }j+2\leq i\leq n-j-1$$ and $$\deg(u_{j+1})\leq\deg(u_{n-j})\leq\min\left\{\deg(u_{j+2}),\dots ,\deg(u_{n-j-1})\right\}.$$ ii) Suppose $j$ is even. From \eqref{twelvef} and \eqref{thirteenf} we have $\tau(H_{L}^{u_{j-1}},x)\leq\tau(H_{R}^{u_{n-j+2}},x)$ and $\deg(u_{j})\leq\deg(u_{n-j+1})$. Then,
\begin{align*}
\tau(H_{L}^{u_j},x)
&=\frac{1}{1+x\left(\deg(u_j)-2+\tau(H_{L}^{u_{j-1}},x)\right)}\\
&\geq\frac{1}{1+x\left(\deg(u_{n-j+1})-2+\tau(H_{R}^{u_{n-j+2}},x)\right)}
=\tau(H_{R}^{u_{n-j+1}},x).
\end{align*}
Therefore, \begin{equation}
\tau(H_{L}^{u_j},x)\geq\tau(H_{R}^{u_{n-j+1}},x).\label{twentyf}
\end{equation}By Lemma \ref{lemma4m}, $\tau(H_{L}^{u_j},x)=\tau(H_{R}^{u_{n-j+1}},x)$ if and only if $H_{L}^{u_j}\approx_r H_{R}^{u_{n-j+1}}$, in such a case $\deg(u_j)=\deg(u_{n-j+1})$ and $\tau(H_{L}^{u_{j-1}},x)=\tau(H_{R}^{u_{n-j+2}},x)$. Otherwise $\tau(H_{L}^{u_j},x)>\tau(H_{R}^{u_{n-j+1}},x)$. If $\tau(H_{L}^{u_j},x)=\tau(H_{R}^{u_{n-j+1}},x)$, then we choose $\deg(u_{j+1})\geq\deg(u_{n-j})$. Otherwise, $\tau(H_{L}^{u_j},x)>\tau(H_{R}^{u_{n-j+1}},x)$ and by Lemma \ref{minimal1}, we must have $\deg(u_{j+1})\geq\deg(u_{n-j})$. Hence, in all cases we have \begin{equation}
\deg(u_{j+1})\geq\deg(u_{n-j}).\label{twentyonef}
\end{equation}From \eqref{twelvef} we have $\tau(H_{R}^{u_{n-j+2}},x)<\tau(H_{L}^{u_{i-2}},x)$, for any $j+2\leq i\leq n-j-1$. By Lemma \ref{minimal1}, we must have $\deg(u_{n-j+1})\leq\deg(u_{i-1})$ and hence
\begin{align*}
\tau(H_{R}^{u_{n-j+1}},x)
&=\frac{1}{1+x\left(\deg(u_{n-j+1})-2+\tau(H_{R}^{u_{n-j+2}},x)\right)}\\
&>\frac{1}{1+x\left(\deg(u_{i-1})-2+\tau(H_{L}^{u_{i-2}},x)\right)}
=\tau(H_{L}^{u_{i-1}},x).
\end{align*}
Therefore, \begin{equation}
\tau(H_{R}^{u_{n-j+1}},x)>\tau(H_{L}^{u_{i-1}},x)\text{, for any }j+2\leq i\leq n-j-1.\label{twentytwof}
\end{equation}Hence, from \eqref{twentyf} and \eqref{twentytwof} we must have \begin{equation}
\tau(H_{L}^{u_j},x)\geq\tau(H_{R}^{u_{n-j+1}},x)>\tau(H_{L}^{u_{i-1}},x)\text{, for any }j+2\leq i\leq n-j-1.\label{twentythreef}
\end{equation}From \eqref{twentythreef} we have $\tau(H_{R}^{u_{n-j+1}},x)>\tau(H_{L}^{u_{i-1}},x)$, for any $j+2\leq i\leq n-j-1$. By Lemma \ref{minimal1}, we must have $\deg(u_{n-j})\geq\deg(u_{i})$. Hence \begin{equation}
\deg(u_{n-j})\geq\max\left\{\deg(u_{j+2}),\dots ,\deg(u_{n-j-1})\right\}.\label{twentyfourf}
\end{equation}From \eqref{twentyonef} and \eqref{twentyfourf} we have \begin{equation}
\deg(u_{j+1})\geq\deg(u_{n-j})\geq\max\left\{\deg(u_{j+2}),\dots ,\deg(u_{n-j-1})\right\}.\label{twentyfivef}
\end{equation}Hence, from \eqref{twentythreef} and \eqref{twentyfivef} if $j$ is even, then $$\tau(H_{L}^{u_j},x)\geq\tau(H_{R}^{u_{n-j+1}},x)>\tau(H_{L}^{u_{i-1}},x)\text{, for all }j+2\leq i\leq n-j-1$$ and $$\deg(u_{j+1})\geq\deg(u_{n-j})\geq\max\left\{\deg(u_{j+2}),\dots ,\deg(u_{n-j-1})\right\}.$$ 
\end{proof}

\begin{rem}
\label{Rem:MTOZE}
In section 1, we saw that if $T$ and $T^{'}$ are trees, such that $M(T,x)\leq M(T^{'},x)$ for all positive $x\in\R$, then $$Z(T)=M(T,1)\leq M(T^{'},1)=Z(T^{'}),$$and $$En(T)=\frac{2}{\pi}\int_{0}^{+\infty}\frac{1}{x^2}\ln M(T,x^2)dx\leq\frac{2}{\pi}\int_{0}^{+\infty}\frac{1}{x^2}\ln M(T^{'},x^{2})dx=En(T^{'}).$$
\end{rem}
Hence, from Theorem \ref{minimal theorem} we deduce the following corollary.
\begin{cor}\label{minimal theorem Z and En}
Let $\mathbb{C}_D$ be the set of all caterpillars with reduced degree sequence $D$. Then, $Z(\mathcal{S}(D))\leq Z(H)$ and $En(\mathcal{S}(D))\leq En(H)$ for all $H\in\mathbb{C}_D$.
\end{cor}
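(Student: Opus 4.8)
The plan is to obtain the corollary as an essentially immediate consequence of Theorem~\ref{minimal theorem} together with the elementary monotonicity of $Z$ and $En$ in the matching generating polynomial recorded in Remark~\ref{Rem:MTOZE}. Theorem~\ref{minimal theorem} already supplies the strong pointwise bound $M(\mathcal{S}(D),x)\le M(H,x)$ for every $H\in\mathbb{C}_D$ and every real $x>0$, which is the entire substance; from there only two short specializations are needed.

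First I would set $x=1$. Since $Z(G)=\sum_{k\ge 0}m(G,k)=M(G,1)$ by \eqref{HosoyaindexOriginalEquation}, evaluating the inequality of Theorem~\ref{minimal theorem} at $x=1$ gives $Z(\mathcal{S}(D))=M(\mathcal{S}(D),1)\le M(H,1)=Z(H)$. Next I would invoke the Coulson integral formula \eqref{energyoftrees2}, which applies since caterpillars are trees, hence forests. Substituting $x\mapsto x^2$ in Theorem~\ref{minimal theorem} yields $M(\mathcal{S}(D),x^2)\le M(H,x^2)$ for all $x>0$; because $\tfrac{1}{x^2}\ge 0$ and $\ln$ is increasing, the integrand associated to $\mathcal{S}(D)$ is pointwise at most that associated to $H$, and integrating over $(0,+\infty)$ preserves the inequality, giving $En(\mathcal{S}(D))\le En(H)$. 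Both integrals are finite by the standard argument: $M(G,x^2)=1+m(G,1)x^2+\cdots$ has constant term $1$, so $\ln M(G,x^2)\sim m(G,1)x^2$ near $0$ and $\ln M(G,x^2)=O(\ln x)$ as $x\to\infty$.

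There is no real obstacle here, since all the difficulty has been absorbed into Theorem~\ref{minimal theorem} and its inductive construction of $\mathcal{S}(D)$ from the branches $C_L^k(D)$ and $C_R^k(D)$. The only points worth a sentence are the trivial domain checks (a caterpillar is a forest, so \eqref{energyoftrees2} is valid) and the observation that a pointwise order of the polynomials $M(\cdot,x)$ is carried through the positive increasing transformation $t\mapsto \tfrac{1}{x^2}\ln t$ and through integration — precisely what Remark~\ref{Rem:MTOZE} records. Hence the corollary follows.
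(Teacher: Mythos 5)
Your argument is correct and is exactly the paper's route: specializing Theorem~\ref{minimal theorem} at $x=1$ for the Hosoya index and feeding the pointwise inequality $M(\mathcal{S}(D),x^2)\le M(H,x^2)$ through the Coulson integral \eqref{energyoftrees2} for the energy, which is precisely what Remark~\ref{Rem:MTOZE} records. No gaps; nothing further is needed.
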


\section{Caterpillars with different degree sequences}
\label{Sec:DiffDeg}

\begin{defn}
Let $(b_1,\dots ,b_n)$ and $(d_1,\dots ,d_n)$ be two degree sequences of trees. We say that $(d_1,\dots ,d_n)$ majorizes $(b_1,\dots ,b_n)$ and write $(b_1,\dots ,b_n)\preceq (d_1,\dots ,d_n)$, if and only if for all $k\in\{1,\dots ,n\}$ we have $$\sum_{i=1}^{k}b_i\leq\sum_{i=1}^{k}d_i.$$If furthermore there exists $i_0$, such that $d_{i_0}\neq b_{i_0}$, then we write $(b_1,\dots ,b_n)\prec (d_1,\dots ,d_n)$.
\end{defn}

This section compares $\mathcal{S}(D)$ and $\mathcal{S}(D')$ when $D'$ majorises $D$. Combined with Theorem \ref{minimal theorem} and Corollary \ref{minimal theorem Z and En}, this leads to a few corollaries on classes of trees with various degree conditions.
First, we consider the case where there are exactly two positions where the entries in $D$ and $D'$ are different. The following lemma describes a transfer of a leaf, from a small degree vertex of $\mathcal{S}_D$ to one with a larger degree, that decreases $M(.,x).$ 
\begin{lem}\label{minimal3}
Let $i,j$ and $n$ be positive integers such that $1\leq i<j\leq n$. Let $D=(d_1,\dots ,d_i,\dots ,d_j,\dots ,d_n)$ be reduced degree sequence of a caterpillar $C$, such that $d_i\geq d_j>2$. Decompose $\mathcal{S}(D)$ as in Figure \ref{FX3}, with $B$, $B^{'}$ and $H$ non-empty. Let $v$ and $w$ be vertices of  $\mathcal{S}(D)$ such that $\deg(v)=d_i$ and $\deg(w)=d_j$. Let $w'$ be a leaf adjacent to $w$. Let $G$ be obtained from $\mathcal{S}(D)$ by removing the edge $ww'$ and then adding the edge $vw'$. If either i) $d_i>d_j$ or ii) $d_i=d_j$ and $M(H-w,x)\geq M(H-v,x)$, then $$M(\mathcal{S}(D),x)>M(G,x)\geq M(\mathcal{S}(D^{'}),x),$$ where $D^{'}=(d_1,\dots ,d_{i-1},d_i+1,d_{i+1},\dots ,d_{j-1},d_{j}-1,d_{j+1},\dots ,d_n)$.
\begin{figure}[htbp]
\centering
\includegraphics[scale=0.5]{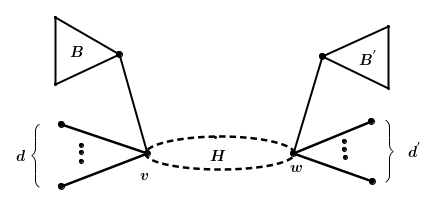}
\caption{Decomposition of the caterpillar $\mathcal{S}(D)$ in Lemma \ref{minimal3}.}
\label{FX3}
\end{figure}
\end{lem}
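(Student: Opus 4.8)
The plan is to handle the two inequalities in the chain $M(\mathcal{S}(D),x)>M(G,x)\ge M(\mathcal{S}(D^{'}),x)$ by entirely different means. The right-hand inequality comes for free from Theorem~\ref{minimal theorem}: moving the pendant leaf $w^{'}$ from $w$ to $v$ keeps the graph a caterpillar, and since $d_j>2$ we have $d_j-1\ge 2$, so no entry of the sequence drops to $1$; hence $G$ is a caterpillar whose reduced degree sequence is the non-increasing rearrangement of $D^{'}$. Theorem~\ref{minimal theorem} then gives $M(G,x)\ge M(\mathcal{S}(D^{'}),x)$. So the real work is the strict inequality $M(\mathcal{S}(D),x)>M(G,x)$.

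For that, I would decompose \emph{both} $\mathcal{S}(D)$ and $G$ exactly as in Figure~\ref{FX3}, using the \emph{same} complete branches $B$ at $v$ and $B^{'}$ at $w$ and the \emph{same} middle part $H$: transferring $w^{'}$ leaves $B$, $B^{'}$ and $H$ untouched, and only changes the pair $(d,d^{'})$ of numbers of pendant leaves at $v$ and at $w$ into $(d+1,d^{'}-1)$. In this decomposition $v$ has exactly one neighbour through $B$ and exactly one neighbour inside $H$, so $\deg(v)=d+2$ and hence $d=d_i-2$, and likewise $d^{'}=d_j-2$; in particular $d-d^{'}=d_i-d_j\ge0$ and $d^{'}\ge1$ (using $d_i\ge d_j>2$). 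Now I would substitute both caterpillars into \eqref{M(G,x)} and subtract. The terms $M(H,x)$ and $x^{2}\tau(B,x)\tau(B^{'},x)M(H-\{v,w\},x)$ cancel, the coefficient of $x^{2}M(H-\{v,w\},x)$ coming from the $dd^{'}$-term changes by $dd^{'}-(d+1)(d^{'}-1)=d-d^{'}+1$, and the displayed formula for $M_{v}^{w}$ gives $M_{v}^{w}(\mathcal{S}(D),x)-M_{v}^{w}(G,x)=x\bigl(M(H-w,x)-M(H-v,x)\bigr)+x^{2}M(H-\{v,w\},x)\bigl(\tau(B,x)-\tau(B^{'},x)\bigr)$. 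Collecting everything and using $d-d^{'}=d_i-d_j$,
\begin{align*}
M(\mathcal{S}(D),x)-M(G,x)
&=M(B,x)M(B^{'},x)\Bigl[\bigl(d_i-d_j+1+\tau(B,x)-\tau(B^{'},x)\bigr)\,x^{2}M(H-\{v,w\},x)\\
&\qquad\qquad{}+x\bigl(M(H-w,x)-M(H-v,x)\bigr)\Bigr].
\end{align*}

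The final step is to show the bracket is strictly positive in each case. Since $M(B,x),M(B^{'},x)>0$, $x^{2}M(H-\{v,w\},x)>0$, and $0<\tau(B,x),\tau(B^{'},x)\le1$ by Lemma~\ref{lemma3p}, the scalar $d_i-d_j+1+\tau(B,x)-\tau(B^{'},x)$ exceeds $d_i-d_j\ge0$. In case (ii), where $d_i=d_j$ and $M(H-w,x)\ge M(H-v,x)$, both summands in the bracket are then non-negative and the first is strictly positive, so we are done. In case (i), where $d_i-d_j\ge1$, the one extra ingredient is the elementary bound $M(H-v,x)-M(H-w,x)\le xM(H-\{v,w\},x)$: writing $w^{-}$ for the unique neighbour of $w$ inside the forest $H$ and assuming $v\neq w^{-}$, an application of \eqref{3.18} to $w$ in $H-v$ gives $M(H-v,x)=M(H-\{v,w\},x)+xM(H-\{v,w,w^{-}\},x)\le(1+x)M(H-\{v,w\},x)$, using that deleting a vertex never increases $M(\,\cdot\,,x)$ (again from \eqref{3.18}); similarly deleting $v$ from $H-w$ gives $M(H-w,x)\ge M(H-\{v,w\},x)$; subtracting yields the bound, and if instead $v=w^{-}$ (i.e. $v$ and $w$ are adjacent in $H$) then $M(H-v,x)=M(H-w,x)$ outright. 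Feeding this into the bracket leaves at least $\bigl(d_i-d_j+\tau(B,x)-\tau(B^{'},x)\bigr)\,x^{2}M(H-\{v,w\},x)>0$, because $d_i-d_j\ge1$ and $\tau(B,x)-\tau(B^{'},x)>-1$. Hence $M(\mathcal{S}(D),x)>M(G,x)$.

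I do not expect a conceptual obstacle. The step needing most care is the one in the middle paragraph: verifying that the leaf transfer genuinely leaves $B$, $B^{'}$ and $H$ unchanged, and that $d=d_i-2$, $d^{'}=d_j-2$, so that \eqref{M(G,x)} can legitimately be applied to both caterpillars with the same $B$, $B^{'}$, $H$. Once that bookkeeping is in place, the argument is just the algebra of \eqref{M(G,x)} together with the two one-line monotonicity facts about $M(\,\cdot\,,x)$ above, with Lemma~\ref{lemma3p} supplying the bounds on $\tau$ and Theorem~\ref{minimal theorem} supplying the second half of the chain.
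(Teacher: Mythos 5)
Your proof is correct, and its skeleton coincides with the paper's: you use the same decomposition with the same $B$, $B^{'}$, $H$ and $d=d_i-2$, $d^{'}=d_j-2$, the same substitution into \eqref{M(G,x)}, the same difference formula $M(\mathcal{S}(D),x)-M(G,x)=M(B,x)M(B^{'},x)\bigl[x^{2}M(H-\{v,w\},x)\bigl((d+1+\tau(B,x))-(d^{'}+\tau(B^{'},x))\bigr)+x\bigl(M(H-w,x)-M(H-v,x)\bigr)\bigr]$, and the same appeal to Theorem \ref{minimal theorem} for $M(G,x)\geq M(\mathcal{S}(D^{'}),x)$. The one place where you genuinely diverge is case i), $d_i>d_j$: the paper observes that $\mathcal{S}(D)$ is minimal (Theorem \ref{minimal theorem}) and invokes Lemma \ref{minimal1} ii) and iii) to conclude $\tau(B,x)\geq\tau(B^{'},x)$ and $M(H-w,x)\geq M(H-v,x)$, so that both terms in the bracket are nonnegative and the first strictly positive; you instead make no use of extremality or of Lemma \ref{minimal1}, proving the elementary bound $M(H-v,x)-M(H-w,x)\leq xM(H-\{v,w\},x)$ (via \eqref{3.18} applied to $w$, which has degree $1$ in $H$, plus the monotonicity $M(G-u,x)\leq M(G,x)$) and absorbing the possibly negative term, leaving $\bigl(d_i-d_j+\tau(B,x)-\tau(B^{'},x)\bigr)x^{2}M(H-\{v,w\},x)>0$. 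Your route is more self-contained and slightly more general — it shows the strict inequality for the leaf transfer on any caterpillar decomposed this way in case i), not only on the extremal $\mathcal{S}(D)$, which is in the spirit of the paper's Lemma \ref{lemma needed} — while the paper's version is shorter given that Lemma \ref{minimal1} and Theorem \ref{minimal theorem} are already available. Both arguments are sound; your handling of the degenerate situation where $v$ and $w$ are adjacent in $H$ (so $H$ is a single edge) is also correct.
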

\begin{proof}
Let $d=d_i-2$ and $d^{'}=d_j-2$. Suppose $\deg(v)=d_i>d_j=\deg(w)$. Then, $d>d^{'}$. Since $\mathcal{S}(D)$ is minimal with respect to $M(.,x)$ and $d>d^{'}$, then by Lemma \ref{minimal1} ii) and iii), we must have $\tau(B,x)\geq\tau(B^{'},x)$ and $M(H-v,x)\leq M(H-w,x)$. From Equation \eqref{M(G,x)}, we have
\begin{align*}
M(\mathcal{S}(D),x)
&=M(B,x)M(B^{'},x)\left[M(H,x)+dd^{'}x^2M(H-\{v,w\},x)\right.\\
&\hspace*{2cm}+\left.x^2\tau(B,x)\tau(B^{'},x)M(H-\{v,w\},x)+M_{v}^{w}(\mathcal{S}(D),x)\right]
\end{align*}
where,
\begin{align*}
M_{v}^{w}(\mathcal{S}(D),x)
&=x\left[\left(d^{'}+\tau(B^{'},x)\right)M(H-w,x)+\left(d+\tau(B,x)\right)M(H-v,x)\right]\\
&+x^2M(H-\{v,w\},x)\left[d\tau(B^{'},x)+d^{'}\tau(B,x)\right]
\end{align*}
and
\begin{align*}
M(G,x)
&=M(B,x)M(B^{'},x)\left[M(H,x)+(d+1)(d^{'}-1)x^2M(H-\{v,w\},x)\right.\\
&\hspace*{3.2cm}+\left.x^2\tau(B,x)\tau(B^{'},x)M(H-\{v,w\},x)+M_{v}^{w}(G,x)\right]
\end{align*}
where,
\begin{align*}
M_{v}^{w}(G,x)
&=x\left[\left(d^{'}+\tau(B^{'},x)-1\right)M(H-w,x)+(d+\tau(B,x)+1)M(H-v,x)\right]\\
&\hspace*{2.3cm}+x^2M(H-\{v,w\},x)\left[(d+1)\tau(B^{'},x)+\left(d^{'}-1\right)\tau(B,x)\right].
\end{align*}
Then,
\begin{align*}
&M(\mathcal{S}(D),x)-M(G,x)\\
&=M(B,x)M(B^{'},x)\left[\left(d-d^{'}+1\right)x^2M(H-\{v,w\},x)+M_{v}^{w}(\mathcal{S}(D),x)-M_{v}^{w}(G,x)\right].
\end{align*}
But,
\begin{align*}
&M_{v}^{w}(\mathcal{S}(D),x)-M_{v}^{w}(G,x)\\
&=x\left[M(H-w,x)-M(H-v,x)\right]+x^2M(H-\{v,w\},x)\left[\tau(B,x)-\tau(B^{'},x)\right].
\end{align*}
Therefore,
\begin{align*}
&M(\mathcal{S}(D),x)-M(G,x)\\
&=M(B,x)M(B^{'},x)\left[x^2M(H-\{v,w\},x)\left[(d+\tau(B,x)+1)-\left(d^{'}+\tau(B^{'},x)\right)\right]\right.\\
&\hspace{7.15cm}+x\left.\left(M(H-w,x)-M(H-v,x)\right)\right]\\
&>0\text{, since }x>0,0<\tau(B,x),\tau(B^{'},x)\leq 1,d>d^{'}\text{ and }M(H-w,x)\geq M(H-v,x).
\end{align*}

Suppose $d_i=d_j$ and $M(H-w,x)\geq M(H-v,x)$. Then, $d=d^{'}$ and 
\begin{align*}
&M(\mathcal{S}(D),x)-M(G,x)\\
&=M(B,x)M(B^{'},x)\left[x^2M(H-\{v,w\},x)\left(\tau(B,x)+1-\tau(B^{'},x)\right)\right.\\
&\hspace*{7.55cm}+x\left.\left(M(H-w,x)-M(H-v,x)\right)\right]\\
&>0\text{, since }x>0,0<\tau(B,x),\tau(B^{'},x)\leq 1\text{ and }M(H-w,x)\geq M(H-v,x).
\end{align*}
Hence $M(\mathcal{S}(D),x)>M(G,x)$. Since $\mathcal{S}(D^{'})$ and $G$ have the same reduced degree sequence $D^{'}$ and are both caterpillars, then by Theorem \ref{minimal theorem}, we must have $M(G,x)\geq M(\mathcal{S}(D^{'}),x)$. Hence, we conclude $$M(\mathcal{S}(D),x)>M(G,x)\geq M(\mathcal{S}(D^{'}),x).$$
\end{proof}
Then next lemma is comparable to Lemma \ref{minimal3}, except that the transformed graph is not necessarily $\mathcal{S}(D)$.
\begin{lem}\label{lemma needed}
Let $G$ be a caterpillar and be decomposed as in Figure \ref{Figure-1}, with $B$, $B^{'}$ and $H$ non-empty. Let $w^{'}$ be a leaf adjacent to $w$. Let $G^{'}$ be obtained from $G$ by removing the edge $ww{'}$ and then adding the edge $vw^{'}$. If $\tau(B,x)\geq\tau(B^{'},x)$, $d\geq d^{'}$ and $M(H-v,x)\leq M(H-w,x)$, then $M(G,x)>M(G^{'},x)$.
\end{lem}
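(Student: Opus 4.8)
The plan is to follow verbatim the computation carried out at the end of the proof of Lemma~\ref{minimal3}; the point is that the three structural inequalities needed there, namely $d\geq d'$, $\tau(B,x)\geq\tau(B',x)$ and $M(H-v,x)\leq M(H-w,x)$, are now \emph{hypotheses}, so no appeal to minimality (Theorem~\ref{minimal theorem}, Lemma~\ref{minimal1}) is required.

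First I would describe the effect of the leaf transfer on the decomposition of Figure~\ref{Figure-1}. The vertex $w'$ is one of the $d'$ leaves hanging at $w$ in the notation introduced before \eqref{EquationMax}. Re-rooting it at $v$ does not alter the complete branches $B$ and $B'$ or their roots, hence leaves $M(B,x)$, $M(B',x)$, $\tau(B,x)$, $\tau(B',x)$ unchanged; it also leaves $H$ unchanged (the leaves of $v$ and of $w$ are precisely what is deleted from $T$ to form $H$, and the transfer only moves $w'$ from the second group of leaves to the first), hence $M(H,x)$, $M(H-v,x)$, $M(H-w,x)$ and $M(H-\{v,w\},x)$ are all unchanged. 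The sole effect is to replace the pair $(d,d')$ by $(d+1,d'-1)$. Applying \eqref{M(G,x)} to $G$ and to $G'$ therefore yields two expressions differing only through this substitution, including inside the term $M_v^w$.

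Then I would subtract. Using $dd'-(d+1)(d'-1)=d-d'+1$ and, just as in Lemma~\ref{minimal3},
\[
M_v^w(G,x)-M_v^w(G',x)=x\big(M(H-w,x)-M(H-v,x)\big)+x^2M(H-\{v,w\},x)\big(\tau(B,x)-\tau(B',x)\big),
\]
one arrives at
\[
M(G,x)-M(G',x)=M(B,x)M(B',x)\Big[x^2M(H-\{v,w\},x)\big((d+1+\tau(B,x))-(d'+\tau(B',x))\big)+x\big(M(H-w,x)-M(H-v,x)\big)\Big].
\]
Finally I would check positivity of the bracket: $M(B,x)M(B',x)>0$ since $B$ and $B'$ are non-empty, $x>0$, and $M(H-\{v,w\},x)\geq 1>0$ because matching polynomials have constant term $1$ and non-negative coefficients; the hypotheses $d\geq d'$ and $\tau(B,x)\geq\tau(B',x)$ give $(d+1+\tau(B,x))-(d'+\tau(B',x))\geq 1>0$; and $M(H-w,x)-M(H-v,x)\geq 0$ by hypothesis. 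Hence the bracket is strictly positive and $M(G,x)>M(G',x)$.

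The only step needing real care is the first one: making sure $B$, $B'$ and $H$ are genuinely untouched by the transfer and that the transfer is faithfully encoded as $(d,d')\mapsto(d+1,d'-1)$ in \eqref{M(G,x)}. Everything afterwards is the verbatim algebra from the last displays of the proof of Lemma~\ref{minimal3}, with the required sign conditions now read off directly from the assumptions rather than deduced from optimality of $G$.
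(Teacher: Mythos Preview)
Your proof is correct and follows essentially the same route as the paper: apply \eqref{M(G,x)} to $G$ and to $G'$, subtract, simplify using $dd'-(d+1)(d'-1)=d-d'+1$ and the analogous identity for $M_v^w$, then read off strict positivity from the hypotheses. Your added remarks, that the transfer leaves $B$, $B'$, $H$ untouched and is encoded solely as $(d,d')\mapsto(d+1,d'-1)$, and that $M(H-\{v,w\},x)\geq 1$, make the argument slightly more explicit than the paper's version but do not change the method.
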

\begin{proof}
Suppose $B$, $B^{'}$ and $H$ are non-empty. Suppose $\tau(B,x)\geq\tau(B^{'},x)$, $d\geq d^{'}$ and $M(H-v,x)\leq M(H-w,x)$.
\begin{align*}
M(G,x)
&=M(B,x)M(B^{'},x)\left[M(H,x)+dd^{'}x^2M(H-\{v,w\},x)\right.\\
&\hspace*{3cm}+\left.x^2\tau(B,x)\tau(B^{'},x)M(H-\{v,w\},x)+M_{v}^{w}(G,x)\right]
\end{align*}
and
\begin{align*}
M(G^{'},x)
&=M(B,x)M(B^{'},x)\left[M(H,x)+(d+1)\left(d^{'}-1\right)x^2M(H-\{v,w\},x)\right.\\
&\hspace*{3cm}+\left.x^2\tau(B,x)\tau(B^{'},x)M(H-\{v,w\},x)+M_{v}^{w}(G^{'},x)\right].
\end{align*}
Then,
\begin{align*}
&M(G,x)-M(G^{'},x)\\
&=M(B,x)M(B^{'},x)\left[x^2M(H-\{v,w\},x)\left(d-d^{'}+1\right)+M_{v}^{w}(G,x)-M_{v}^{w}(G^{'},x)\right].
\end{align*}
Using
\begin{align*}
&M_{v}^{w}(G,x)-M_{v}^{w}(G^{'},x)\\
&=x\left[M(H-w,x)-M(H-v,x)\right]+x^2M(H-\{v,w\},x)\left[\tau(B,x)-\tau(B^{'},x)\right],
\end{align*}
we have
\begin{align*}
&M(G,x)-M(G^{'},x)\\
&=M(B,x)M(B^{'},x)\left[x^2M(H-\{v,w\},x)\left(d+\tau(B,x)+1-\left(d^{'}+\tau(B^{'},x)\right)\right)\right.\\
&\hspace{7.5cm}+\left.x\left[M(H-w,x)-M(H-v,x)\right]\right]\\
&>0\text{, since }d\geq d^{'},\tau(B,x)\geq\tau(B^{'},x)\text{ and }M(H-w,x)\geq M(H-v,x).
\end{align*}
\end{proof}

Now, we describe a situation when a transfer of a possibly big branch decreases $M(.,x)$.
\begin{lem}\label{minimal4}
Let $G$ be a caterpillar and be decomposed  as in Figure \ref{Figure-1}, with $B$, $B^{'}$ and $H$ non-empty. Let $G^{'}$ be obtained from $G$ by removing the edge $wr(B^{'})$ and then adding the edge $vr(B^{'})$. Suppose $G$ is minimal with respect to $M(.,x)$. If either
\begin{itemize}
\item[1)] $\tau(B,x)>\tau(B^{'},x)$,

\item[2)] $d>d^{'}$, or

\item[3)] $\tau(B,x)=\tau(B^{'},x)$, $d=d^{'}$ and $M(H-v,x)\leq M(H-w,x)$, 
\end{itemize}
then $$M(G,x)>M(G^{'},x).$$
\end{lem}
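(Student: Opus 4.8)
The plan is to compute $M(G,x)-M(G',x)$ explicitly using the expansion \eqref{M(G,x)} in all three cases, just as in the proofs of Lemma~\ref{minimal3} and Lemma~\ref{lemma needed}, but now accounting for the fact that the whole branch $B'$ is moved from $w$ to $v$ rather than a single leaf. After the move, the branch rooted at $v$ becomes $B$ together with $B'$ as an extra subtree, so its $\tau$-value changes; likewise $w$ loses one neighbour. Concretely, write $\widetilde B$ for the branch at $v$ in $G'$ (with $\rd(\widetilde B)=d+1$ counting the old $d$ leaves plus $B'$) and note that $w$ now carries only $d'$ leaves in $G'$, i.e.\ its branch is the pseudo-leaf branch $[d'+1]$. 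Applying \eqref{M(G,x)} to $G'$ with the roles of ``$B$'' played by $\widetilde B$ and ``$B'$'' played by $[d'+1]$, and subtracting, everything factors through $M(B,x)M(B',x)$ because $M(\widetilde B,x)=M(B,x)M(\text{(leaves)},x)\cdot(\dots)$ — more cleanly, one expands both $M(G,x)$ and $M(G',x)$ directly from Figure~\ref{Figure-1} before forming $\tau$'s, so that the common factor $M(B,x)M(B',x)$ is visible and the difference reduces to a polynomial in $x$ with coefficients built from $M(H,x)$, $M(H-v,x)$, $M(H-w,x)$, $M(H-\{v,w\},x)$, $d$, $d'$, $\tau(B,x)$ and $\tau(B',x)$.

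The key structural point is that since $G$ is minimal with respect to $M(.,x)$, Lemma~\ref{minimal1} already forces strong compatibility between the hypotheses of each case and the remaining quantities: in case~1, $\tau(B,x)>\tau(B',x)$ gives (by Lemma~\ref{minimal1}i), applied with the roles of $B$ and $B'$ swapped) $d\ge d'$ and $M(H-v,x)\le M(H-w,x)$; in case~2, $d>d'$ together with minimality gives (Lemma~\ref{minimal1}ii,iii) $\tau(B,x)\ge\tau(B',x)$ and $M(H-v,x)\le M(H-w,x)$; in case~3 the inequality $M(H-v,x)\le M(H-w,x)$ is assumed outright. So in all three cases we have simultaneously $d\ge d'$, $\tau(B,x)\ge\tau(B',x)$, $M(H-v,x)\le M(H-w,x)$, \emph{and} at least one of these three is strict (the last being strict means we are fine; if only $d>d'$ or only $\tau(B,x)>\tau(B',x)$, that strictness will survive into the final expression). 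The plan is then to show that the difference $M(G,x)-M(G',x)$ is, up to the positive factor $M(B,x)M(B',x)$, a nonnegative combination of $x^2M(H-\{v,w\},x)\bigl[(d+\tau(B,x))-(d'+\tau(B',x))\bigr]$ and $x\bigl[M(H-w,x)-M(H-v,x)\bigr]$ plus possibly a further manifestly positive term coming from the extra ``$+1$'' that appears because moving $B'$ to $v$ raises $\rd$ there by one while the degree-product term $dd'$ is replaced by $(d+1)d'$ — exactly the mechanism that produced the decisive $(d-d'+1)$ in Lemma~\ref{minimal3}. Each summand is then $\ge 0$ by Lemma~\ref{minimal0} and the compatibility inequalities, and strictness of any one of them yields $M(G,x)>M(G',x)$.

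The main obstacle I anticipate is bookkeeping the $\tau$ change at $v$ correctly: because a nontrivial branch $B'$ (not a leaf) is appended at $v$, the branch at $v$ in $G'$ has $\tau$-value $\dfrac{1}{\,1+x\sum_{i}\tau(B_i,x)+x\tau(B',x)\,}$ rather than something expressible purely through $\tau(B,x)$, so one cannot simply reuse \eqref{M(G,x)} verbatim with a relabelling. The clean way around this is to \emph{not} pass to $\tau$'s at $v$ for $G'$, but instead to redo the decomposition of Figure~\ref{Figure-1} for $G'$ treating the combined object at $v$ as the ``$B$''-branch and the reduced object at $w$ (now just $d'$ leaves) as the ``$B'$''-branch; then the analogue of \eqref{M(G,x)} for $G'$ has $\tau(\widetilde B,x)$ and $\tau([d'+1],x)=\frac{1}{1+x d'}$ in it, and the identity $M(\widetilde B,x)\,\tau(\widetilde B,x)=M_0(\widetilde B,x)=M(B,x)M_0(\text{leaves})\cdots$ lets one re-expose the common factor $M(B,x)M(B',x)$ after a short manipulation using Lemma~\ref{lemma1p} and Lemma~\ref{lemma2p}. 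Once the common factor is out, the argument is a routine sign check via Lemma~\ref{minimal0}, and I would organize the write-up as two displayed computations (one for $M(G,x)-M(G',x)$, one for the auxiliary difference of the $M_v^w$-terms) followed by the case analysis, mirroring the structure of Lemma~\ref{minimal3}.
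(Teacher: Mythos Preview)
Your reduction step is exactly what the paper does: invoke Lemma~\ref{minimal1} so that in each of the three cases one has simultaneously $d\ge d'$ and $M(H-v,x)\le M(H-w,x)$. After that, however, the paper takes a much simpler route than the one you outline, and the difference is worth noting.

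You propose to feed $G'$ back into the $\tau$-form \eqref{M(G,x)}, with a composite branch $\widetilde B$ at $v$ and a pseudo-leaf branch at $w$, and then fight to recover the common factor $M(B,x)M(B',x)$. The paper avoids this entirely: it never uses \eqref{M(G,x)} or any $\tau$'s in the difference computation. Instead it stays with the \emph{pre-$\tau$} four-term expansion derived just above \eqref{EquationMax},
\[
M(G,x)=M(B)M(B')M(T)+xM(B)M_0(B')M(T-w)+xM(B')M_0(B)M(T-v)+x^2M_0(B)M_0(B')M(T-\{v,w\}),
\]
and computes $M(G',x)$ by the same two edge-cuts $vr(B)$ and $vr(B')$. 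Because in $G'$ both branches hang from the \emph{same} vertex $v$, the $x^2$ cross-term simply does not occur (one cannot match $v$ to $r(B)$ and to $r(B')$ at once), so
\[
M(G',x)=M(B)M(B')M(T)+xM(B)M_0(B')M(T-v)+xM(B')M_0(B)M(T-v).
\]
Subtracting gives
\[
M(G,x)-M(G',x)=xM(B)M_0(B')\bigl[M(T-w)-M(T-v)\bigr]+x^2M_0(B)M_0(B')M(T-\{v,w\}),
\]
and after substituting $M(T-w)-M(T-v)=M(H-w)-M(H-v)+x(d-d')M(H-\{v,w\})$ the sign is immediate: the second summand is strictly positive (this is where strict inequality comes from, independently of which of 1)--3) holds), and the first is nonnegative by the inequalities obtained from Lemma~\ref{minimal1}. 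No $\tau(B,x)-\tau(B',x)$ term ever appears; the expression you anticipated, modelled on Lemma~\ref{minimal3}, is an artefact of the leaf-transfer situation and does not carry over when a whole branch is moved.

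Your $\widetilde B$ plan can be made to work, but it forces you to re-derive \eqref{M(G,x)} for a decomposition it was not written for and then to undo the $\tau(\widetilde B,x)$ packaging to expose $M(B,x)M(B',x)$---precisely the ``obstacle'' you flagged. The paper's observation that attaching $B$ and $B'$ at a common vertex kills the cross-term is the shortcut that removes that obstacle altogether.
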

\begin{proof}
Suppose $G$ is a caterpillar and decomposed as in Figure \ref{Figure-1}. Suppose $G$ is minimal with respect to $M(.,x)$. Suppose $\tau(B,x)>\tau(B^{'},x)$, then by Lemma \ref{minimal1} i), we must have $d\geq d^{'}$ and $M(H-v,x)\leq M(H-w,x)$. Suppose $d>d^{'}$, then by Lemma \ref{minimal1} ii) and iii), we must have $M(H-v,x)\leq M(H-w,x)$. Therefore, if 1), 2) or 3) holds, then $d\geq d^{'}$ and $M(H-v,x)\leq M(H-w,x)$. $G^{'}$ can be decomposed as in Figure \ref{FX4}.
\begin{figure}[htbp]
\centering
\includegraphics[scale=0.5]{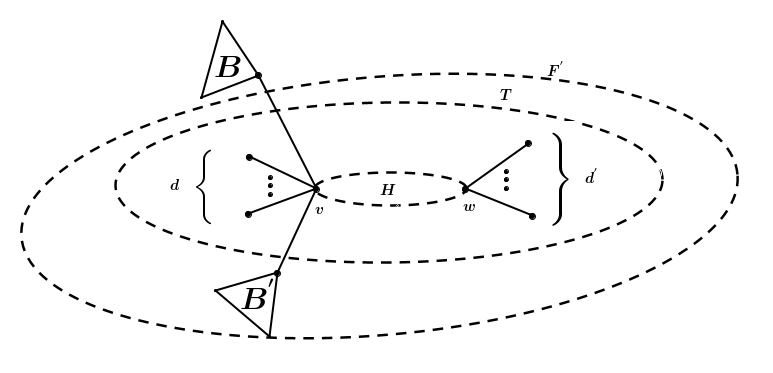}
\caption{Decomposition of the tree $G^{'}$ in the proof of Lemma \ref{minimal4}. }
\label{FX4}
\end{figure}
Hence, we have
\begin{align*}
M(G^{'},x)
&=M(B,x)M(B^{'},x)M(T,x)+xM(B,x)M(B^{'}-r(B^{'}),x)M(T-v,x)\\
&\hspace{5.3cm}+xM(B-r(B),x)M(B^{'},x)M(T-v,x)
\end{align*}
and
\begin{align*}
M(G,x)
&=M(B,x)M(B^{'},x)M(T,x)+xM(B,x)M(B^{'}-r(B^{'}),x)M(T-w,x)\\
&\hspace{5.3cm}+xM(B^{'},x)M(B-r(B),x)M(T-v,x)\\
&+x^2M(B-r(B),x)M(B^{'}-r(B^{'}),x)M(T-\{v,w\},x).
\end{align*}
Then,
\begin{align*}
&M(G,x)-M(G^{'},x)\\
&=xM(B,x)M(B^{'}-r(B^{'}),x)\left[M(H-w,x)-M(H-v,x)\right.\\
&\hspace*{7.25cm}\left.+xM(H-\{v,w\},x)(d-d^{'})\right]\\
&+x^2M(B-r(B),x)M(B^{'}-r(B^{'}),x)M(H-\{v,w\},x)
>0,
\end{align*}
$\text{since }x>0,M(B-r(B),x),M(B^{'}-r(B^{'}),x),M(H-\{v,w\},x)>0,M(H-w,x)\geq M(H-v,x)\text{ and }d\geq d^{'}.$
\end{proof}

\begin{thm}\label{minimal theorem2}
Let $(y_1,\dots ,y_n)$ and $(d_1,\dots ,d_n)$ be two degree sequences of caterpillars. If $(y_1,\dots ,y_n)\prec (d_1,\dots ,d_n)$ and $\displaystyle{\sum_{i=1}^{n}y_i=\sum_{i=1}^{n}d_i}$, then for all $x>0$ we have $$M(\mathcal{S}(y_1,\dots ,y_n),x)>M(\mathcal{S}(d_1,\dots ,d_n),x)$$and hence,$$Z(\mathcal{S}(y_1,\dots ,y_n))>Z(\mathcal{S}(d_1,\dots ,d_n))$$and $$En(\mathcal{S}(y_1,\dots ,y_n))>En(\mathcal{S}(d_1,\dots ,d_n)).$$
\end{thm}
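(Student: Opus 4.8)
The plan is to reduce the majorization comparison to a sequence of elementary "leaf transfers" of the type analyzed in Lemmas \ref{minimal3}, \ref{lemma needed} and \ref{minimal4}. The classical fact about majorization is that if $(y_1,\dots,y_n)\prec(d_1,\dots,d_n)$ with equal sums, then $(d_1,\dots,d_n)$ can be obtained from $(y_1,\dots,y_n)$ by a finite chain of elementary transformations, each of which picks two coordinates $i<j$ with (after possible reordering) $y_i\ge y_j$ and replaces $(y_i,y_j)$ by $(y_i+1,y_j-1)$, while staying within the set of degree sequences that majorize $(y_1,\dots,y_n)$ and are majorized by $(d_1,\dots,d_n)$. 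So it suffices to prove the strict inequality $M(\mathcal{S}(D'),x)<M(\mathcal{S}(D),x)$ for a single such step, where $D'=(d_1,\dots,d_i+1,\dots,d_j-1,\dots,d_n)$ and $D$ is the "before" sequence; iterating along the chain and using transitivity of $<$ then gives the theorem, and the passage from $M(\cdot,x)$ to $Z$ and $En$ is exactly Remark \ref{Rem:MTOZE}.

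\textbf{Key steps.}
First I would record the majorization-chain lemma (either citing it or giving the standard one-line argument: take the smallest index where the partial sums of $Y$ and $D$ differ and push one unit there). Second, for a single step I would distinguish cases according to whether the two vertices whose degrees change are both non-leaf vertices of reduced degree $>2$, or whether one of them has reduced degree exactly $2$ (equivalently, true degree $3$) or $1$ — because Lemma \ref{minimal3} requires $d_i\ge d_j>2$. In the main case, decompose $\mathcal{S}(D)$ as in Figure \ref{FX3} so that $v$ is the vertex of degree $d_i$ and $w$ the vertex of degree $d_j$, with branches $B$, $B'$ and central part $H$ all non-empty; since $\mathcal{S}(D)$ is minimal for $M(\cdot,x)$ by Theorem \ref{minimal theorem}, Lemma \ref{minimal1} supplies the needed inequalities $\tau(B,x)\ge\tau(B',x)$ and $M(H-v,x)\le M(H-w,x)$, and then Lemma \ref{minimal3} (case i if $d_i>d_j$, case ii if $d_i=d_j$) gives $M(\mathcal{S}(D),x)>M(G,x)\ge M(\mathcal{S}(D'),x)$. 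Third, I would handle the degenerate sub-cases: if $d_j=2$ so that removing a leaf from $w$ turns $w$ into a degree-$2$ (pass-through) vertex, or if the transfer involves creating/destroying a vertex of degree $1$, one re-describes the transfer as moving a whole branch rather than a single leaf and invokes Lemma \ref{minimal4} (or Lemma \ref{lemma needed} when the transformed graph is not literally $\mathcal{S}$), again with the branch inequalities coming from Lemma \ref{minimal1}. In every sub-case the result is a strict inequality $M(\mathcal{S}(D),x)>M(\mathcal{S}(D'),x)$.

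\textbf{Main obstacle.}
The delicate point is \emph{not} the single-step inequality — the three transfer lemmas already do that work — but rather verifying that the majorization chain can be realized entirely by legal moves, i.e. moves in which the two changed coordinates can be matched to genuine non-leaf vertices of $\mathcal{S}$ of the right sizes and the hypotheses $d_i\ge d_j$ (and $>2$ where required) always hold along the chain. When a coordinate passes through the value $2$, the combinatorial structure of the caterpillar changes (a non-leaf vertex appears or disappears as a "true" internal vertex), and one must be careful that the labelling in the decomposition of $\mathcal{S}(D)$ still makes $B$, $B'$, $H$ non-empty so that the lemmas apply; I would deal with this by choosing the chain so that each step changes a pair of coordinates both of which stay $\ge 2$, which is always possible since $Y$ and $D$ are both reduced degree sequences of caterpillars (entries $\ge 2$, with at least two entries), and then a single clean application of Lemma \ref{minimal3} suffices for every step. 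The bookkeeping of indices (matching the abstract coordinates $i,j$ to the concrete vertices $v,w$ in Figure \ref{FX3}) is the part that needs the most care but is otherwise routine.
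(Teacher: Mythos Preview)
Your overall strategy---reduce to a chain of elementary majorization steps and handle each step via the transfer Lemmas~\ref{minimal3}, \ref{lemma needed}, \ref{minimal4}, finishing each step with Theorem~\ref{minimal theorem}---is exactly the paper's approach. The problem is in your ``Main obstacle'' paragraph, where you claim the chain can be arranged so that a single application of Lemma~\ref{minimal3} suffices for every step.

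First, the theorem is stated for \emph{full} degree sequences, not reduced ones (the paper's proof writes $y_m>d_m\ge 1$, not $\ge 2$), so entries equal to $1$ do occur and a coordinate can drop from $2$ to $1$ along the chain. Second, even if every coordinate stayed $\ge 2$, Lemma~\ref{minimal3} requires $d_j>2$, strictly: when $y_m=2$ the vertex $w$ of that degree has no pendant leaf to detach, and the lemma is simply inapplicable. This case cannot be dodged by re-routing the chain. Take $Y=(3,2,2,1,1,1)$ and $D=(4,2,1,1,1,1)$: both are caterpillar degree sequences, $Y\prec D$, the only indices of disagreement are $1$ and $3$, and $y_3=2$. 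There is no alternative elementary step available.

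The paper does treat exactly this situation, and it is more work than your third bullet suggests. When $y_m=2$, one first uses Lemma~\ref{minimal4} to move the whole branch $B'$ across $w$ (here the specific structure of $\mathcal{S}(Y)$ is used to ensure $\tau(B',x)$ is minimal among the relevant branches), producing an intermediate caterpillar $G$ whose degree sequence is neither $Y$ nor $Y_1$. One then needs a further leaf transfer inside $G$ via Lemma~\ref{lemma needed}; but since $G$ is no longer $\mathcal{S}$ of anything, Lemma~\ref{minimal1} does not automatically orient the inequalities, and the paper splits into sub-cases (swap $B$ and $B'$, flip $H$, or swap the pendant counts) according to the signs of $\deg(v)-\deg(w')$ and $M(H-v,x)-M(H-w',x)$, applying Lemma~\ref{minimal1} at each rearrangement before Lemma~\ref{lemma needed} can fire. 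Only then does one land on a caterpillar with degree sequence $Y_1$ and appeal to Theorem~\ref{minimal theorem}. So your outline is correct, but the claim that the degenerate case is avoidable is false, and that case is precisely where the substantive bookkeeping lives.
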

\begin{proof}
Suppose ${Y}=(y_1,\dots ,y_n)$ and $D=(d_1,\dots ,d_n)$ are two degree sequences of caterpillars, ${Y}\prec D$ and $\displaystyle{\sum_{i=1}^{n}y_i=\sum_{i=1}^{n}d_i}$. Then, there exists $i_0$, such that $d_{i_0}\neq y_{i_0}$. Infact, the set $\mathbb{I}=\{i:d_i\neq y_i\}$ must have  at least two elements, otherwise $\displaystyle{\sum_{i=1}^{n}y_i=\sum_{i=1}^{n}d_i}$ would be impossible. Let $l=\min\{i:d_i\neq y_i\}$ and $m=\max\{i:d_i\neq y_i\}$. We must have $y_m>d_m\geq 1$ and we must also have $d_l>y_l$. We define $${Y}_1=(y_1,\dots ,y_{l-1},y_{l}+1,y_{l+1},\dots ,y_{m-1},y_{m}-1,y_{m+1},\dots ,y_n).$$Note that ${Y}_1$ is still a valid degree sequence. If $l>1$, then $y_{l-1}=d_{l-1}\geq d_l\geq y_{l}+1> d_m+1\geq 2$ and if $l=1$, then $d_l\geq y_l+1>d_m+1\geq 2$. If $m<n$, then $y_m-1\geq d_m\geq d_{m+1}=y_{m+1}\geq 1$ and if $m=n$, then $y_m-1\geq d_m\geq 1$. It is clear that ${Y}\prec {Y}_1$. If $y_m>2$, then by applying Lemma \ref{minimal3} to $\mathcal{S}({Y})$, we know that there exists a caterpillar $G$ with degree sequence ${Y}_1$ such that $$M(\mathcal{S}({Y}),x)>M(G,x)\geq M(\mathcal{S}({Y}_1),x).$$Otherwise $y_m=2$. In such case, decompose $\mathcal{S}({Y})$ as in Figure \ref{figuresigma}, with $B$ non-empty, $\deg(v)=y_l$ and $\deg(w)=y_m=2$.
\begin{figure}[htbp]
\centering
\includegraphics[scale=0.5]{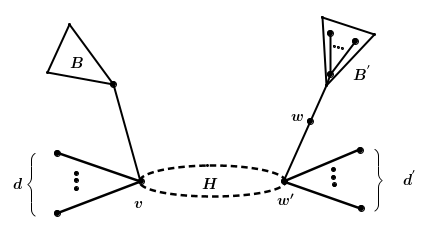}
\caption{Decomposition of the caterpillar $\mathcal{S}({Y})$ in the proof of Theorem \ref{minimal theorem2}.}
\label{figuresigma}
\end{figure} 
In view of the structure of $\mathcal{S}(Y)$, we can choose $w$ so that $\deg(r(B^{'}))\geq\deg(u')$, for all non-leaf vertices $u^{'}$ in $\mathcal{S}({Y})$. Since $$\displaystyle{\tau(B^{'},x)=\frac{1}{1+x\left(\deg(r(B^{'}))-1\right)}},$$$$\displaystyle{\tau(B^{u'}_L,x)=\frac{1}{1+x\left(\deg(u')-2+\tau(B^{*},x)\right)}},\text{ for some branch $B^{*}$ in $B^{u'}_L$}$$\\ and $\deg(r(B^{'}))\geq\deg(u')$, for all non-leaf vertices $u^{'}$ in $\mathcal{S}({Y})$, we have $\tau(B^{'},x)\leq\tau(B^{u'}_L,x)$, with equality if and only if $\deg(u')=\deg(r(B^{'}))$, $u'$ is a pseudo leaf and $u'$ is not $r(B^{'})$.  Let $G$ be a caterpillar obtained from $\mathcal{S}({Y})$, by removing the edge $wr(B^{'})$ and then adding the edge $w^{'}r(B^{'})$. 
Since $\deg(w^{'})\geq 2=\deg(w)$, $\tau(B_{L}^{u'},x)\geq \tau(B^{'},x)$ and $M(H'-w^{'},x)=M(H'-w,x)$ ($H'$ is the two vertex path with end vertices $w$ and $w^{'}$), then by Lemma \ref{minimal4}, we must have $M(\mathcal{S}({Y}),x)>M(G,x)$.

Recall that $\tau(B,x)\geq\tau(B^{'},x)$. Let $w''$ be a leaf adjacent to $w^{'}$ in $G$. Let $G^{'}$ be a caterpillar obtained from $G$ as follows: if $\deg(v)\geq\deg(w^{'})$ and $M(H-v,x)\leq M(H-w^{'},x)$, then remove the edge $w^{'}w''$ and then add the edge $vw''$. Then by Lemma \ref{lemma needed} we have $$M(G,x)>M(G^{'},x).$$ Otherwise, $\deg(v)<\deg(w^{'})$ or $M(H-v,x)> M(H-w^{'},x)$. If $\deg(v)\leq\deg(w^{'})$ and $M(H-v,x)\geq M(H-w^{'},x)$, then let $G^{*}$ be obtained from $G$ by swapping $B$ and $B^{'}$, and then $G^{'}$ obtained from $G^{*}$ by removing the edge $w^{'}w''$ and then adding the edge $vw''$. By Lemma \ref{minimal1} we have $M(G,x)>M(G^{*},x)$ and by Lemma \ref{lemma needed} we have $M(G^{*},x)>M(G^{'},x)$. If $\deg(v)>\deg(w^{'})$ and $M(H-v,x)\geq M(H-w^{'},x)$, then let $G^{*}$ be obtained from $G$ by flipping $H$,and then $G^{'}$ obtained from $G^{*}$ by removing the edge $w^{'}w''$ and then adding the edge $vw''$ 
. Then by Lemma \ref{minimal1} we have $M(G,x)>M(G^{*},x)$ and by Lemma \ref{lemma needed} we have $M(G^{*},x)>M(G^{'},x)$. If $\deg(v)\leq\deg(w^{'})$ and $M(H-v,x)<M(H-w^{'},x)$, then let $G^{*}$ be obtained by swapping $d$ and $d^{'}+1$, and then obtain $G^{'}$ from $G^{*}$ by removing the edge $w^{'}w''$ and then adding the edge $vw''$. Then by Lemma \ref{minimal1} we have $M(G,x)>M(G^{*},x)$ and by Lemma \ref{lemma needed} we have $M(G^{*},x)>M(G^{'},x)$. Then, by Lemmas \ref{minimal1} and \ref{lemma needed},we must have $M(G,x)>M(G^{'},x)$ and the degree sequence of $G^{'}$ is ${Y}_1$. Hence, by Theorem \ref{minimal theorem}, $$M(\mathcal{S}({Y}),x)>M(G,x)>M(G^{'},x)\geq M(\mathcal{S}({Y}_1),x).$$  Note that the degree sequence of $G$ is not ${Y}_1$ and not ${Y}$.

If ${Y}_1=D$, then we are done. Otherwise, we iterate the process. We set ${Y}={Y}_0$, and if $k$ is a positive integer and ${Y}_k\neq D$, then we construct ${Y}_{k+1}$ in exactly the same way as ${Y}_1$ was constructed from ${Y}$. After a finite number $\displaystyle{J=\frac{1}{2}\sum_{i\in\mathbb{I}}|d_i-y_i|}$ of iterations, we will get the chain $${Y}={Y}_0\prec {Y}_1\prec\dots \prec {Y}_{J-1}\prec {Y}_J=D.$$For any $k\in\{1,\dots ,J-1\}$, we can apply Lemmas \ref{minimal1}, \ref{minimal3} and \ref{lemma needed}, with Theorem \ref{minimal theorem} to $\mathcal{S}({Y}_k)$ as we did above, to deduce that there exists a caterpillar $G_{k+1}$ with degree sequence ${Y}_{k+1}$ such that $$M(\mathcal{S}({Y}_k),x)>M(G_{k+1},x)\geq M(\mathcal{S}({Y}_{k+1}),x).$$Hence $$M(\mathcal{S}({Y}),x)>M(\mathcal{S}({Y}_1),x)>\dots >M(\mathcal{S}({Y}_J),x)=M(\mathcal{S}(D),x).$$
\end{proof}
We can deduce corollaries of Theorems \ref{minimal theorem} and \ref{minimal theorem2} using similar ways as in \cite{[1]}. Below are some examples. We skip the proofs, they follow the following sketch.  Whenever the degree sequence of any element of a set of caterpillars $\mathbb{S}$ is majorised by $D$ and $\mathcal{S}(D)\in \mathbb{S}$, then $M(\mathcal{S}(D),x)\geq M(T,x)$ for all $x>0$ and $T\in\mathbb{S}$.

\begin{cor}\label{minimal corrolary1}
For any Caterpillar $C$ of order $n$ and diameter $m$ $(\leq n-1)$, we have $$M(C,x)\geq M(\mathcal{S}(d,\underbrace{2,\dots ,2}_{m-2}),x)\text{, for all positive $x\in\R$}$$and hence $$Z(C)\geq Z(\mathcal{S}(d,\underbrace{2,\dots ,2}_{m-2}))\qquad\text{ and }\qquad En(C)\geq En(\mathcal{S}(d,\underbrace{2,\dots ,2}_{m-2})),$$with equality if and only if $C\cong\mathcal{S}(d,\underbrace{2,\dots ,2}_{m-2})$, where $d=n-m+1$.
\end{cor}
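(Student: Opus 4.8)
The plan is to realize any caterpillar $C$ of order $n$ and diameter $m$ as an element of a set $\mathbb{S}$ of caterpillars whose degree sequences are all majorized by the fixed sequence $D=(d,\underbrace{2,\dots,2}_{m-2})$ with $d=n-m+1$, and then invoke the sketch recalled just before the corollary: if every caterpillar in $\mathbb{S}$ has reduced degree sequence majorized by $D$ and $\mathcal{S}(D)\in\mathbb{S}$, then $M(\mathcal{S}(D),x)\ge M(T,x)$ for all $x>0$ and all $T\in\mathbb{S}$. Concretely, I would take $\mathbb{S}$ to be the set of all caterpillars of order $n$ and diameter at most $m$ (equivalently, with a spine path on at most $m-1$ internal vertices, counting endpoints of the diameter-realizing path appropriately).

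First I would pin down the combinatorics of the diameter constraint. A caterpillar of order $n$ and diameter $m$ has a longest path with $m+1$ vertices; its two ends are leaves, and the remaining $m-1$ vertices form the spine, each of degree $\ge 2$. So the reduced degree sequence has at most $m-1$ entries, each entry is $\ge 2$, and the entries sum to $2(m-1) + (\text{number of extra leaves})$. Since there are $n$ vertices total and $m-1$ spine vertices each contributing at least its two spine-neighbours, a short count gives that the sum of the reduced degree sequence is $2(m-1)+(n-m-1) = n+m-3$; the same total is attained by $D=(d,2,\dots,2)$ since $d + 2(m-2) = (n-m+1) + 2m-4 = n+m-3$. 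Next I would verify the majorization $D' \preceq D$ for every reduced degree sequence $D'=(d_1',d_2',\dots)$ of a caterpillar in $\mathbb{S}$: padding $D'$ with $2$'s up to length $m-1$ if necessary, both sequences sum to $n+m-3$, and since $D$ has the maximal possible first entry $d=n-m+1$ (no internal vertex can have degree exceeding $n-m+1$ when the diameter is $m$, because the $m-1$ spine vertices and the two path-end leaves already use up $m+1$ vertices, leaving only $n-m-1$ vertices to be attached as leaves anywhere) and all subsequent entries equal to the minimum possible value $2$, every partial sum $\sum_{i=1}^k d_i'$ is at most $\sum_{i=1}^k d_i$. This is the step I expect to require the most care: handling diameters strictly less than $m$ (shorter reduced sequences), ensuring the padding-by-$2$'s argument is legitimate for the majorization comparison, and checking the edge cases $m=2,3$ where $D$ degenerates.

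Then I would apply Theorem~\ref{minimal theorem2} together with Theorem~\ref{minimal theorem}: for any $T\in\mathbb{S}$ with reduced degree sequence $D'\prec D$, Theorem~\ref{minimal theorem} gives $M(T,x)\ge M(\mathcal{S}(D'),x)$, and Theorem~\ref{minimal theorem2} gives $M(\mathcal{S}(D'),x) > M(\mathcal{S}(D),x)$ for all $x>0$; if instead $D'=D$ then $M(T,x)\ge M(\mathcal{S}(D),x)$ by Theorem~\ref{minimal theorem}, with equality forcing $T\cong\mathcal{S}(D)$ since Theorem~\ref{minimal theorem}'s argument pins down the minimizer up to isomorphism. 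Chaining these, $M(C,x)\ge M(\mathcal{S}(d,2,\dots,2),x)$ for all positive $x$, with equality throughout if and only if $C\cong\mathcal{S}(d,2,\dots,2)$. Finally, passing to $x=1$ yields the Hosoya-index inequality $Z(C)\ge Z(\mathcal{S}(d,2,\dots,2))$ via $Z(G)=M(G,1)$, and integrating against $\tfrac{2}{\pi x^2}$ over $(0,\infty)$ as in Remark~\ref{Rem:MTOZE} and equation~\eqref{energyoftrees2} yields $En(C)\ge En(\mathcal{S}(d,2,\dots,2))$, again with the stated equality case.
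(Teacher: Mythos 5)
Your main chain (majorization of reduced degree sequences, then Theorem~\ref{minimal theorem} to pass from $C$ to $\mathcal{S}(D')$ and Theorem~\ref{minimal theorem2} to pass from $\mathcal{S}(D')$ to $\mathcal{S}(D)$, then Remark~\ref{Rem:MTOZE} for $Z$ and $En$) is exactly the route the paper's sketch intends. The genuine flaw is your choice of $\mathbb{S}$ as the caterpillars of order $n$ and diameter \emph{at most} $m$, together with the padding-by-$2$'s step. If $C$ has diameter $m'<m$, its reduced degree sequence has $m'-1$ entries summing to $n+m'-3$, not $n+m-3$; padding with $2$'s raises the sum to $n+2m-m'-3>n+m-3$, so your claim that ``both sequences sum to $n+m-3$'' is false, Theorem~\ref{minimal theorem2} (which requires equal lengths and equal sums) does not apply, and the padding corresponds to no actual caterpillar anyway. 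Worse, the minimality claim over this enlarged $\mathbb{S}$ is simply false: for $n=5$, $m=4$ the target is $\mathcal{S}(2,2,2)=P_5$ with $M(P_5,x)=1+4x+3x^2$, while the diameter-$3$ caterpillar with reduced sequence $(3,2)$ has $M=1+4x+2x^2<M(P_5,x)$ for all $x>0$. The fix is immediate and is what the corollary actually requires: take $\mathbb{S}$ to be the caterpillars of order $n$ and diameter \emph{exactly} $m$. Then every reduced degree sequence $D'$ has exactly $m-1$ entries, each at least $2$ and at most $n-m+1$, with total $n+m-3$, and the complement argument $\sum_{i\le k}d_i'=(n+m-3)-\sum_{i>k}d_i'\le(n+m-3)-2(m-1-k)=d+2(k-1)$ gives $D'\preceq D$ without any padding; the rest of your chain then goes through and coincides with the paper's sketch.

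A smaller caution concerns the equality case: Theorem~\ref{minimal theorem} only asserts $M(\mathcal{S}(D'),x)\le M(H,x)$, not uniqueness of the minimizer, so ``Theorem~\ref{minimal theorem}'s argument pins down the minimizer up to isomorphism'' is not literally available as stated. When $D'\prec D$ strictly, Theorem~\ref{minimal theorem2} already gives the strict inequality you need; but when $D'=D$ and $C\not\cong\mathcal{S}(D)$ you must extract a strict inequality among caterpillars with the same degree sequence, either by revisiting the strict inequalities in Lemmas~\ref{minimal1} and~\ref{minimal2} inside the proof of Theorem~\ref{minimal theorem}, or by a direct comparison for the specific sequence $(d,2,\dots,2)$, where the only freedom is the position of the degree-$d$ vertex on the spine.
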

\begin{cor}\label{minimal corrolary3}
For any Caterpillar $C$ of order $n$ and vertex degree  at most $d\leq n-m+1$, we have $$M(C,x)\geq M(\mathcal{S}(\underbrace{d,\dots ,d}_{k},r),x)\text{, for all positive $x\in\R$}$$and hence $$En(C)\geq En(\mathcal{S}(\underbrace{d,\dots ,d}_{k},r))$$and $$Z(C)\geq Z(\mathcal{S}(\underbrace{d,\dots ,d}_{k},r)),$$with equality if and only if $C\cong\mathcal{S}(\underbrace{d,\dots ,d}_{k},r)$, where $k=\left\lfloor\frac{n-2}{d-1}\right\rfloor$ and\\ $r=\begin{cases}0,&\text{ if }k=\frac{n-2}{d-1}\in\mathbb{Z}\vspace{0.2cm}\\n-\left\lfloor\frac{n-2}{d-1}\right\rfloor(d-1)-1,&\text{ otherwise.}\end{cases}$
\end{cor}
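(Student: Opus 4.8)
The plan is to deduce Corollary \ref{minimal corrolary3} from Theorems \ref{minimal theorem} and \ref{minimal theorem2} by the majorization sketch announced just before the statement, so the proof is really a matter of (a) identifying the right degree sequence and (b) checking the majorization relation. First I would note that any caterpillar $C$ of order $n$ with maximum degree at most $d$ has a reduced degree sequence $D_C=(d_1,\dots,d_t)$ with every $d_i\le d$ and (since $C$ is a caterpillar on $n$ vertices) $\sum_{i=1}^t d_i = 2t-2 + (\text{number of leaves}) = n + t - 2$, equivalently $\sum_{i=1}^t (d_i-1) = n-2$. So the problem reduces to: among all reduced degree sequences $D$ of caterpillars with parts bounded by $d$ and $\sum (d_i-1) = n-2$, which one majorizes all the others (after padding with $1$'s to a common length)? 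The claim is that this maximal sequence is $(\underbrace{d,\dots,d}_{k},r)$ with $k=\lfloor (n-2)/(d-1)\rfloor$ and $r$ as defined, i.e. one packs the ``excess degree'' $n-2$ into as few vertices as possible, each taken as large as allowed.

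Next I would verify that $(\underbrace{d,\dots,d}_{k},r)$ does majorize $D_C$ for every competing caterpillar $C$. Arrange both sequences in non-increasing order and pad the shorter one with trailing $1$'s so both have the same length $N$ (taking $N$ large enough, e.g. $N=n$, is harmless since appending $1$'s to a caterpillar degree sequence does not change the caterpillar $\mathcal{S}(D)$ up to the convention, and in any case one compares partial sums). For every prefix length $j$, $\sum_{i=1}^j (\text{parts of }(\underbrace{d,\dots,d}_k,r)) = \min(jd,\ n-2+j)$ while $\sum_{i=1}^j(\text{parts of }D_C) \le \min(jd,\ n-2+j)$ because each part of $D_C$ is at most $d$ (giving the bound $jd$) and the total of the $(d_i-1)$'s is $n-2$ (giving the bound $n-2+j$ after adding back the $j$ ones). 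Hence $D_C \preceq (\underbrace{d,\dots,d}_k,r)$, and the inequality is strict unless the two sorted-and-padded sequences coincide. Also $(\underbrace{d,\dots,d}_k,r)$ is itself the reduced degree sequence of a caterpillar (it satisfies the standard realizability condition for trees, $\sum(d_i-1)=n-2$ with each $d_i\ge 1$, and $\mathcal{S}$ of it is in the class), so $\mathcal{S}(\underbrace{d,\dots,d}_k,r)\in\mathbb{S}$.

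With those two facts in hand the conclusion is immediate: by Theorem \ref{minimal theorem}, $M(C,x)\ge M(\mathcal{S}(D_C),x)$ for all $x>0$; by Theorem \ref{minimal theorem2} applied to $D_C \prec (\underbrace{d,\dots,d}_k,r)$ (when they differ), $M(\mathcal{S}(D_C),x) > M(\mathcal{S}(\underbrace{d,\dots,d}_k,r),x)$; and when $D_C$ equals the extremal sequence we have equality throughout and $C\cong \mathcal{S}(\underbrace{d,\dots,d}_k,r)$ by Theorem \ref{minimal theorem}'s characterization together with the fact that $\mathcal{S}(D)$ is the unique minimizer (one should note Theorem \ref{minimal theorem} as stated gives only ``$\le$'', so strictness of the ``only if'' direction needs the uniqueness that is implicit in its proof via Lemma \ref{lemma4m}). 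Chaining these gives $M(C,x)\ge M(\mathcal{S}(\underbrace{d,\dots,d}_k,r),x)$ with equality iff $C\cong\mathcal{S}(\underbrace{d,\dots,d}_k,r)$, and then $Z$ and $En$ follow from Remark \ref{Rem:MTOZE}. The main obstacle, and the only place any real work is needed, is the combinatorial majorization check of the second paragraph — in particular getting the prefix-sum bound $\sum_{i=1}^j d_i \le \min(jd, n-2+j)$ exactly right, handling the padding with $1$'s so that the two sequences are compared at equal length, and confirming the stated closed form for $k$ and $r$ matches $\min(jd,n-2+j)$ for every $j$ (the definition of $r$ as $n-\lfloor(n-2)/(d-1)\rfloor(d-1)-1$, or $0$ in the divisible case, is precisely what makes $k$ parts equal to $d$ plus one part equal to $r$ have $(d_i-1)$-sum equal to $n-2$).
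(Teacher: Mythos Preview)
Your proposal is correct and follows precisely the approach the paper sketches: verify that the padded degree sequence of any caterpillar $C$ in the class is majorized by $(\underbrace{d,\dots,d}_k,r)$ via the prefix-sum bound $\sum_{i=1}^j d_i \le \min(jd,\,n-2+j)$, then chain Theorem~\ref{minimal theorem} with Theorem~\ref{minimal theorem2} and pass to $Z$ and $\En$ via Remark~\ref{Rem:MTOZE}. Your flag that the ``only if'' direction of the equality case relies on the uniqueness implicit in the proof of Theorem~\ref{minimal theorem} (rather than its stated inequality) is accurate and worth noting.
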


\section*{Acknowledgement}

We would like to thank the South African Department of Higher Education and Training's Future Professors Programme Phase 01 for their financial support for E. O. D Andriantiana; and the German Academic Exchange Service (DAAD) and the South African National Research Foundation (NRF) for their financial support for Xhanti Sinoxolo 

\bibliography{sn-bibliography}


\begin{thebibliography}{12}
\ifx \bisbn   \undefined \def \bisbn  #1{ISBN #1}\fi
\ifx \binits  \undefined \def \binits#1{#1}\fi
\ifx \bauthor  \undefined \def \bauthor#1{#1}\fi
\ifx \batitle  \undefined \def \batitle#1{#1}\fi
\ifx \bjtitle  \undefined \def \bjtitle#1{#1}\fi
\ifx \bvolume  \undefined \def \bvolume#1{\textbf{#1}}\fi
\ifx \byear  \undefined \def \byear#1{#1}\fi
\ifx \bissue  \undefined \def \bissue#1{#1}\fi
\ifx \bfpage  \undefined \def \bfpage#1{#1}\fi
\ifx \blpage  \undefined \def \blpage #1{#1}\fi
\ifx \burl  \undefined \def \burl#1{\textsf{#1}}\fi
\ifx \doiurl  \undefined \def \doiurl#1{\url{https://doi.org/#1}}\fi
\ifx \betal  \undefined \def \betal{\textit{et al.}}\fi
\ifx \binstitute  \undefined \def \binstitute#1{#1}\fi
\ifx \binstitutionaled  \undefined \def \binstitutionaled#1{#1}\fi
\ifx \bctitle  \undefined \def \bctitle#1{#1}\fi
\ifx \beditor  \undefined \def \beditor#1{#1}\fi
\ifx \bpublisher  \undefined \def \bpublisher#1{#1}\fi
\ifx \bbtitle  \undefined \def \bbtitle#1{#1}\fi
\ifx \bedition  \undefined \def \bedition#1{#1}\fi
\ifx \bseriesno  \undefined \def \bseriesno#1{#1}\fi
\ifx \blocation  \undefined \def \blocation#1{#1}\fi
\ifx \bsertitle  \undefined \def \bsertitle#1{#1}\fi
\ifx \bsnm \undefined \def \bsnm#1{#1}\fi
\ifx \bsuffix \undefined \def \bsuffix#1{#1}\fi
\ifx \bparticle \undefined \def \bparticle#1{#1}\fi
\ifx \barticle \undefined \def \barticle#1{#1}\fi
\bibcommenthead
\ifx \bconfdate \undefined \def \bconfdate #1{#1}\fi
\ifx \botherref \undefined \def \botherref #1{#1}\fi
\ifx \url \undefined \def \url#1{\textsf{#1}}\fi
\ifx \bchapter \undefined \def \bchapter#1{#1}\fi
\ifx \bbook \undefined \def \bbook#1{#1}\fi
\ifx \bcomment \undefined \def \bcomment#1{#1}\fi
\ifx \oauthor \undefined \def \oauthor#1{#1}\fi
\ifx \citeauthoryear \undefined \def \citeauthoryear#1{#1}\fi
\ifx \endbibitem  \undefined \def \endbibitem {}\fi
\ifx \bconflocation  \undefined \def \bconflocation#1{#1}\fi
\ifx \arxivurl  \undefined \def \arxivurl#1{\textsf{#1}}\fi
\csname PreBibitemsHook\endcsname

\bibitem[\protect\citeauthoryear{Gutman and
  Polansky}{1986}]{[IGUTMANOEPOLANSKYBOOKMATHEMATICALCONCEPTS]}
\begin{bbook}
\bauthor{\bsnm{Gutman}, \binits{I.}},
\bauthor{\bsnm{Polansky}, \binits{O.E.}}:
\bbtitle{Mathematical Concepts in Organic Chemistry}.
\bpublisher{Springer},
\blocation{Heidelberg}
(\byear{1986})
\end{bbook}
\endbibitem

\bibitem[\protect\citeauthoryear{Hosoya}{1971}]{[9HHOSOYA]}
\begin{barticle}
\bauthor{\bsnm{Hosoya}, \binits{H.}}:
\batitle{Topological index. a newly proposed quantity characterizing the
  topological nature of structural isomers of saturated hydrocarbons}.
\bjtitle{Bulletin of The Chemical Society of Japan}
\bvolume{44},
\bfpage{2332}--\blpage{2339}
(\byear{1971})
\end{barticle}
\endbibitem

\bibitem[\protect\citeauthoryear{Li et~al.}{2012}]{[BOOKGRAPHENERGY]}
\begin{bbook}
\bauthor{\bsnm{Li}, \binits{X.}},
\bauthor{\bsnm{Shi}, \binits{Y.}},
\bauthor{\bsnm{Gutman}, \binits{I.}}:
\bbtitle{Graph Energy},
p. \bfpage{266}.
\bpublisher{Springer},
\blocation{New York}
(\byear{2012})
\end{bbook}
\endbibitem

\bibitem[\protect\citeauthoryear{Andriantiana}{2013}]{[1]}
\begin{barticle}
\bauthor{\bsnm{Andriantiana}, \binits{E.}}:
\batitle{Energy, {H}osoya index and {M}errifield–{S}immons index of trees
  with prescribed degree sequence}.
\bjtitle{Discrete {A}pplied {Ma}thematics}
\bvolume{161},
\bfpage{724}--\blpage{741}
(\byear{2013})
\end{barticle}
\endbibitem

\bibitem[\protect\citeauthoryear{Andriantiana and Wagner}{2011}]{[2]}
\begin{barticle}
\bauthor{\bsnm{Andriantiana}, \binits{E.O.D.}},
\bauthor{\bsnm{Wagner}, \binits{S.}}:
\batitle{On the number of independent subsets in trees with restricted
  degrees}.
\bjtitle{Discrete {A}pplied {Ma}thematics}
\bvolume{53},
\bfpage{678}--\blpage{683}
(\byear{2011})
\end{barticle}
\endbibitem

\bibitem[\protect\citeauthoryear{Nikiforov}{2007}]{[385]}
\begin{barticle}
\bauthor{\bsnm{Nikiforov}, \binits{V.}}:
\batitle{The energy of c4-free graphs of bounded degree}.
\bjtitle{Linear Algebra and its Applications}
\bvolume{428},
\bfpage{2569}--\blpage{2573}
(\byear{2007})
\end{barticle}
\endbibitem

\bibitem[\protect\citeauthoryear{Lv}{2006}]{[69W]}
\begin{barticle}
\bauthor{\bsnm{Lv}, \binits{X.}}:
\batitle{The merrifield-simmons indices and hosoya indices of trees with given
  maximum degree}.
\bjtitle{MATCH - Communications in Mathematical and in Computer Chemistry}
\bvolume{56},
\bfpage{605}--\blpage{616}
(\byear{2006})
\end{barticle}
\endbibitem

\bibitem[\protect\citeauthoryear{Li et~al.}{2011}]{[316]}
\begin{barticle}
\bauthor{\bsnm{Li}, \binits{J.}},
\bauthor{\bsnm{Li}, \binits{X.}},
\bauthor{\bsnm{Shi}, \binits{Y.}}:
\batitle{On the maximal energy tree with two maximum degree vertices}.
\bjtitle{Linear Algebra and its Applications}
\bvolume{435},
\bfpage{2272}--\blpage{2284}
(\byear{2011})
\end{barticle}
\endbibitem

\bibitem[\protect\citeauthoryear{Lin et~al.}{2005}]{[344]}
\begin{barticle}
\bauthor{\bsnm{Lin}, \binits{W.}},
\bauthor{\bsnm{Guo}, \binits{X.}},
\bauthor{\bsnm{Li}, \binits{H.}}:
\batitle{On the extremal energies of trees with a given maximum degree}.
\bjtitle{MATCH - Communications in Mathematical and in Computer Chemistry}
\bvolume{54},
\bfpage{363}--\blpage{378}
(\byear{2005})
\end{barticle}
\endbibitem

\bibitem[\protect\citeauthoryear{Andriantiana et~al.}{2021}]{Andriantiana20211}
\begin{barticle}
\bauthor{\bsnm{Andriantiana}, \binits{E.O.D.}},
\bauthor{\bsnm{Misanantenaina}, \binits{V.R.}},
\bauthor{\bsnm{Wagner}, \binits{S.}}:
\batitle{Extremal trees with fixed degree sequence}.
\bjtitle{Electronic Journal of Combinatorics}
\bvolume{28}(\bissue{1}),
\bfpage{1}--\blpage{34}
(\byear{2021})
\end{barticle}
\endbibitem

\bibitem[\protect\citeauthoryear{Heuberger and Wagner}{2009}]{Heuberger2009214}
\begin{barticle}
\bauthor{\bsnm{Heuberger}, \binits{C.}},
\bauthor{\bsnm{Wagner}, \binits{S.G.}}:
\batitle{Chemical trees minimizing energy and hosoya index}.
\bjtitle{Journal of Mathematical Chemistry}
\bvolume{46}(\bissue{1}),
\bfpage{214}--\blpage{230}
(\byear{2009})
\end{barticle}
\endbibitem

\bibitem[\protect\citeauthoryear{Andriantiana}{2013}]{[EODANDRIANTIANA]}
\begin{botherref}
\oauthor{\bsnm{Andriantiana}, \binits{E.}}:
Energy and related graph invariants.
PhD thesis,
University of Stellenbosch
(2013)
\end{botherref}
\endbibitem

\end{thebibliography}

\end{document}